\theoremstyle{plain}
\newtheorem{Thm}{Theorem}
\newtheorem{Cor}{Corollary}
\newtheorem{Lem}{Lemma}
\newtheorem{Prop}{Proposition}
\theoremstyle{definition}
\newtheorem{Rmk}{Remark}
\theoremstyle{remark}
\begin{document}

 
\title[SAF-zero pseudo-Anosov maps]{New infinite families of  pseudo-Anosov maps with vanishing Sah-Arnoux-Fathi invariant}

\author{Hieu Trung Do} 
\address{Oregon State University\\Corvallis, OR 97331}
\email{doh@math.oregonstate.edu}

\author{Thomas A. Schmidt}
\address{Oregon State University\\Corvallis, OR 97331}
\email{toms@math.orst.edu}
\keywords{pseudo-Anosov, Sah-Arnoux-Fathi invariant, Pisot units, bi-Perron units, translation surfaces}
\subjclass[2010]{37E30, 57M50, 11R06}
\date{4 March 2016}


\begin{abstract}   We show that an orientable pseudo-Anosov homeomorphism has vanishing Sah-Arnoux-Fathi invariant if and only if the minimal  polynomial of its  dilatation is  not  reciprocal.   We relate this to works of Margalit-Spallone and Birman, Brinkmann and Kawamuro.   Mainly, we use Veech's construction of pseudo-Anosov maps to give explicit pseudo-Anosov maps of vanishing Sah-Arnoux-Fathi invariant.  In particular, we give new infinite families of such maps in genus 3.  
\end{abstract}

\maketitle

\section{Introduction}  

In 1981,  Arnoux-Yoccoz \cite{AY} gave the first example of a pseudo-Anosov homeomorphism whose dilatation was of degree less than twice the genus of the surface on which it is defined.  In fact, they gave an infinite family of these, one in each genus $g \ge 3$.   In his Ph.D. dissertation of the same year,  Arnoux \cite{A2}, see also \cite{A}, showed that each of these maps has vanishing Sah-Arnoux-Fathi (SAF) invariant.    Pseudo-Anosov maps with vanishing SAF-invariant are especially interesting for their dynamical properties,  see   \cite{ABB, LPV, LPV2, McCasc}.  However,  there are few examples known, see below for a list of these.    We find a new infinite family, and to aid in the search for these interesting maps, we also clarify criteria in the literature derived from work of Calta-Smillie \cite{CS}.

We characterize pseudo-Anosov maps with vanishing SAF-invariant. 
\begin{Thm}\label{t:characterization}   Suppose that $\phi$ is an orientable pseudo-Anosov map of a closed compact surface, with  dilatation $\lambda$.   Then $\phi$ has vanishing Sah-Arnoux-Fathi invariant if and only if the minimal polynomial of $\lambda$ is not reciprocal. 
\end{Thm}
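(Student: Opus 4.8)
The plan is to transfer everything to a translation surface and extract the SAF invariant from period data. Since $\phi$ is orientable, it is the affine automorphism of a translation surface $(X,\omega)$ with derivative conjugate to $\operatorname{diag}(\lambda,\lambda^{-1})$, the stable and unstable foliations being the vertical and horizontal foliations of $\omega$; write $\Sigma$ for the zeros of $\omega$, $p$ for the minimal polynomial of $\lambda$, and $K=\mathbb{Q}(\lambda)$. Take $\operatorname{SAF}(\phi)$ to be the SAF invariant of the interval exchange $T$ obtained as first return of the horizontal flow to a vertical transversal; the usual invariance of the SAF invariant (under induced maps and under cut-and-reglue moves) makes its vanishing independent of all choices. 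Now $\phi^{*}$ acts on $H^{1}(X,\Sigma;\mathbb{Z})\cong\mathbb{Z}^{N}$ by an integer matrix $M$ whose characteristic polynomial is divisible by $p$, and $[\operatorname{Re}\omega]$, $[\operatorname{Im}\omega]$ span the $\lambda$- and $\lambda^{-1}$-eigenspaces of $M$ over $\mathbb{R}$, which are one-dimensional (the relative part $H^{0}(\Sigma)/H^{0}(X)$ carries a finite-order action of $\phi^{*}$, so contributes no eigenvalue $\lambda^{\pm1}$, while the $\lambda$-eigenspace on absolute $H^{1}$ is one-dimensional by unique ergodicity of the foliations). Fixing an eigenvector $w\in K^{N}$ for $\lambda$, every horizontal holonomy is a fixed real multiple $r$ of an element of $K$, and every vertical holonomy a fixed real multiple $\rho$ of an element of $K$. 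Since the lengths and translations of $T$ are integral combinations of vertical holonomies, $\operatorname{SAF}(\phi)=\sum_{\alpha}\ell_{\alpha}\wedge t_{\alpha}$ lies in $\rho K\wedge_{\mathbb{Q}}\rho K$, and since multiplication by $\rho$ is a $\mathbb{Q}$-linear isomorphism, $\operatorname{SAF}(\phi)=0$ if and only if the corresponding element $s\in K\wedge_{\mathbb{Q}}K$ vanishes.

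The key input is self-similarity. Since $\phi$ shrinks the transversal by $\lambda^{-1}$ and conjugates the horizontal flow to itself, the first return of $T$ to the shrunken transversal is $T$ rescaled by $\lambda^{-1}$, so invariance of the SAF invariant under induced maps gives $\operatorname{SAF}(\phi)=(m_{\lambda^{-1}}\wedge m_{\lambda^{-1}})\operatorname{SAF}(\phi)$ with $m_{c}$ multiplication by $c$; equivalently $s$ is fixed by $m_{\lambda}\wedge m_{\lambda}$ on $K\wedge_{\mathbb{Q}}K$. Extending scalars, $K\otimes_{\mathbb{Q}}\mathbb{C}\cong\prod_{\sigma}\mathbb{C}$ over the embeddings $\sigma\colon K\hookrightarrow\mathbb{C}$, and $m_{\lambda}\wedge m_{\lambda}$ acts on $\Lambda^{2}_{\mathbb{C}}\bigl(\prod_{\sigma}\mathbb{C}\bigr)$ by $\sigma(\lambda)\tau(\lambda)$ on the $\sigma\wedge\tau$ line. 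A nonzero fixed vector forces $\sigma(\lambda)\tau(\lambda)=1$ for distinct embeddings, i.e. the inverse of some root of $p$ is again a root of $p$; as the roots of the irreducible $p$ form a single Galois orbit, this forces $\lambda^{-1}$ to be a conjugate of $\lambda$, i.e. $p$ reciprocal. Hence if $p$ is not reciprocal the fixed space is $0$, so $s=0$ and $\operatorname{SAF}(\phi)=0$; this recovers Arnoux's computation \cite{A2} for the Arnoux--Yoccoz examples.

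For the converse, assume $p$ is reciprocal. Then $\lambda^{-1}$ is a conjugate of $\lambda$, so there is an order-two automorphism $\iota$ of $K$ with $\iota(\lambda)=\lambda^{-1}$, fixing $\mathbb{Q}(\lambda+\lambda^{-1})$. Applying $\iota$ to $Mw=\lambda w$ shows $\iota(w)$ is a $\lambda^{-1}$-eigenvector of $M$, so by one-dimensionality we may take the $\lambda^{-1}$-eigenvector to be $\iota(w)$; then in a polygonal presentation of $(X,\omega)$ with vertices $P_{k}=(X_{k},Y_{k})$, $P_{0}=0$, one has $X_{k}=r\tilde X_{k}$ and $Y_{k}=\rho\tilde Y_{k}$ with $\tilde X_{k},\tilde Y_{k}\in K$ and $\tilde Y_{k}=\iota(\tilde X_{k})$. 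Up to the harmless $\rho$-rescaling above, $\operatorname{SAF}(\phi)$ is a nonzero rational multiple of the ``vertical'' component $\tilde s=\sum_{k}\tilde Y_{k-1}\wedge\tilde Y_{k}\in K\wedge_{\mathbb{Q}}K$ of the $J$-invariant of \cite{CS} --- this is the form in which the Calta--Smillie criterion should be read. Applying the $\mathbb{Q}$-linear map $q\colon K\wedge_{\mathbb{Q}}K\to K$, $a\wedge b\mapsto a\,\iota(b)-\iota(a)\,b$, and using $\iota(\tilde Y_{k})=\tilde X_{k}$ together with the shoelace formula, $q(\tilde s)=\sum_{k}\bigl(\tilde Y_{k-1}\tilde X_{k}-\tilde X_{k-1}\tilde Y_{k}\bigr)$ is a nonzero rational multiple of the area of the rescaled surface, namely $\operatorname{Area}(X,\omega)/(r\rho)\neq0$. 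Hence $q(\tilde s)\neq0$, so $\tilde s\neq0$ and $\operatorname{SAF}(\phi)\neq0$, which completes the equivalence.

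The hard part is this converse, and within it three points demanding care: that the $\lambda^{\pm1}$-eigenspaces are \emph{exactly} one-dimensional (so the Galois conjugacy between $[\operatorname{Re}\omega]$ and $[\operatorname{Im}\omega]$ is forced rather than merely possible); that $\operatorname{SAF}(\phi)$ is identified cleanly with one component of the $J$-invariant; and that the real normalization scalars $r,\rho$ provably drop out, so that the decisive computation genuinely lives in $K\wedge_{\mathbb{Q}}K$ and is detected by $q$. The implication ``$p$ not reciprocal $\Rightarrow\operatorname{SAF}(\phi)=0$'' is by contrast a short piece of linear algebra once the translation-surface framework is set up.
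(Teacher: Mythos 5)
Your proposal is correct in outline, but it follows a genuinely different route from the paper. The paper outsources the dynamics to Calta--Smillie \cite{CSm}: after putting $(X,\omega)$ in standard form, vanishing of the SAF-invariant is equivalent to the stable direction having slope in the trace field $k=\mathbb Q(\lambda+\lambda^{-1})$; since that slope is a fixed point of a hyperbolic M\"obius transformation with entries in $k$ and trace $\lambda+\lambda^{-1}$, it lies in $k$ exactly when $(\lambda+\lambda^{-1})^2-4$ is a square in $k$, i.e.\ exactly when $\mathbb Q(\lambda)=\mathbb Q(\lambda+\lambda^{-1})$, and the theorem then follows from the purely Galois-theoretic Proposition~\ref{p:recipOrElse}. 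You instead work with periods: your self-similarity argument, showing the SAF class in $K\wedge_{\mathbb Q}K$ is fixed by $m_\lambda\wedge m_\lambda$ whose complexified eigenvalues are $\sigma(\lambda)\tau(\lambda)$, is essentially Arnoux's original computation \cite{A2}, and your converse --- pairing the SAF class against its Galois conjugate via $q(a\wedge b)=a\,\iota(b)-\iota(a)\,b$ to produce $\pm\,\mathrm{Area}(X,\omega)/(r\rho)\neq 0$ --- is a Galois-flux/area argument in the spirit of McMullen \cite{McCasc} and of the $J$-invariant formalism of Kenyon--Smillie and Calta--Smillie \cite{KS, CSm} (your citation should be \cite{CSm}, not \cite{CS}, which is Calta--Schmidt). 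What your route buys is a transparent structural reason why reciprocality is the dividing line (eigenvalue $1$ of $m_\lambda\wedge m_\lambda$ on one side, nondegeneracy of the area pairing on the other), and it applies directionwise rather than only to pseudo-Anosov stable directions; the cost is that you must invoke, without proof, facts the paper's route avoids entirely: transversal-independence of the flow's SAF-invariant and its identification with a component of $J$ (Arnoux \cite{A2}, \cite{CSm}), and, more seriously, that $\lambda$ and $\lambda^{-1}$ are \emph{simple} eigenvalues of the action on $H^1(X,\Sigma;\mathbb R)$, so that the horizontal and vertical period vectors are real multiples of $K$-rational eigenvectors forced to be $\iota$-conjugate. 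Your one-line appeal to unique ergodicity for that simplicity is the weakest link; the statement is true and classical (it follows, e.g., from Fried \cite{F}, or from primitivity of the transition matrix in Veech's construction, Theorem~\ref{t:Veech}, applied to a power of $\phi$ fixing a separatrix, simplicity for the power implying simplicity for $\phi$), but it needs a citation or an argument rather than an assertion. With those references supplied, your proof stands; the paper's argument remains shorter because Calta--Smillie's periodic-direction-field theorem already encapsulates all of the homological bookkeeping you redo by hand.
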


  We  give explicit constructions of new infinite families of pseudo-Anosov maps with vanishing SAF-invariant.  

\begin{Thm}\label{t:newFamily2}   For each $k \in \mathbb N$ with $k\ge 2$, there exists at least four orientable pseudo-Anosov maps in the hyperelliptic component of the stratum $\mathscr{H}(2,2)$ having dilatation of minimal polynomial $x^3-(2k+4)x^2+(k+4)x-1$. In particular, each of these pseudo-Anosov maps has vanishing \textnormal{SAF}-invariant.
\end{Thm}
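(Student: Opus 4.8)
The plan is to run Veech's construction of affine pseudo-Anosov maps on an explicit one-parameter family of translation surfaces and then to quote Theorem~\ref{t:characterization} for the final assertion. First I would write down, for each $k\ge 2$, an explicit polygon with edge identifications whose edge lengths are affine functions of $k$, presenting a translation surface $(X_k,\omega_k)$, and check from the picture that $\omega_k$ has exactly two zeros, each of cone angle $6\pi$, so $(X_k,\omega_k)\in\mathscr{H}(2,2)$ and $X_k$ has genus $3$. To land in the \emph{hyperelliptic} component I would exhibit the hyperelliptic involution $\iota_k$ explicitly — as rotation by $\pi$ of the polygon — note $\iota_k^*\omega_k=-\omega_k$, that $X_k/\iota_k$ is a sphere with the branch data characterizing $\mathscr{H}^{\mathrm{hyp}}(2,2)$, and recall that then $(\iota_k)_*=-\mathrm{Id}$ on $H_1(X_k;\mathbb{Z})$.

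Next I would locate two transverse completely periodic directions on $(X_k,\omega_k)$, say horizontal and vertical, in each of which $X_k$ decomposes into cylinders of commensurable moduli, so the corresponding (multi)twists are realized by affine automorphisms $\psi_1,\psi_2$ with parabolic linear parts $\left(\begin{smallmatrix}1 & \mu_k\\ 0 & 1\end{smallmatrix}\right)$ and $\left(\begin{smallmatrix}1 & 0\\ -\nu_k & 1\end{smallmatrix}\right)$. Taking a fixed word $\phi_k$ in $\psi_1,\psi_2$ — for instance $\phi_k=\psi_1\circ\psi_2^{-1}$, whose linear part $M_k$ has $\operatorname{tr}M_k=2+\mu_k\nu_k>2$ — Veech's criterion makes $\phi_k$ pseudo-Anosov; its invariant foliations are directional foliations of $(X_k,\omega_k)$, preserved together with their transverse orientations by $M_k$ (as $\det M_k=1>0$ and the expanding eigenvalue $\lambda_k$ is positive), so $\phi_k$ is an \emph{orientable} pseudo-Anosov to which Theorem~\ref{t:characterization} applies, and $\lambda_k^{\pm1}$ occur as eigenvalues of $(\phi_k)_*$ on $H_1(X_k;\mathbb{R})$. (If first powers fail to keep the product inside $\mathrm{SL}(X_k,\omega_k)$ or to realize the prescribed dilatation, I would instead use $\phi_k=\psi_1^{a}\circ\psi_2^{-b}$ for suitable $a,b$.)

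The computational heart is to record how the core-curve Dehn twists act on a fixed symplectic basis of $H_1(X_k;\mathbb{Z})\cong\mathbb{Z}^6$, thereby writing $(\phi_k)_*$ as an explicit integer matrix depending on $k$, and to compute its characteristic polynomial, which I expect to factor as
\[
p_k(x)\,\widetilde{p}_k(x),\quad\text{where}\quad p_k(x)=x^3-(2k+4)x^2+(k+4)x-1,\ \ \widetilde{p}_k(x)=x^3-(k+4)x^2+(2k+4)x-1,
\]
the two cubics being the reciprocal partners forced by the symplectic pairing. Since the foliations are orientable, $\lambda_k$ is the spectral radius of $(\phi_k)_*$, hence the largest root of $p_k$; and $p_k$ has no rational root for $k\ge2$ (indeed $p_k(1)=-k$ and $p_k(-1)=-3k-10$), so it is irreducible over $\mathbb{Q}$ and is the minimal polynomial of $\lambda_k$ (its other two roots lie in $(0,1)$, so $\lambda_k$ is in fact a Pisot unit). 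Because $x^3 p_k(1/x)=-\widetilde{p}_k(x)$ is not a scalar multiple of $p_k(x)$ for $k\ge2$ — equality would force $2k+4=k+4$ — the minimal polynomial of $\lambda_k$ is not reciprocal, so Theorem~\ref{t:characterization} yields vanishing SAF-invariant. For the count, $\phi_k$, $\phi_k^{-1}$, $\iota_k\phi_k$, $\iota_k\phi_k^{-1}$ are four affine pseudo-Anosov maps all of dilatation $\lambda_k$; since $(\iota_k\phi_k)_*=-(\phi_k)_*$ has characteristic polynomial $p_k(-x)\widetilde{p}_k(-x)$, with only negative roots, whereas $(\phi_k)_*$ has only positive roots, $\{\phi_k^{\pm1}\}$ and $\{(\iota_k\phi_k)^{\pm1}\}$ are mutually non-conjugate; I would separate $\phi_k$ from $\phi_k^{-1}$ (and likewise the other pair) using the marked separatrix data of their invariant foliations, or, alternatively, produce the remaining two maps from a second pair of transverse periodic directions on $X_k$.

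I expect the principal obstacle to be the computation just described: identifying the two cylinder decompositions and the inverse-moduli $\mu_k,\nu_k$ as functions of $k$, recording the homological action of the associated multitwists correctly, and recognizing the stated factorization of the degree-$6$ characteristic polynomial. Secondary difficulties are the bookkeeping that places $(X_k,\omega_k)$ in the \emph{hyperelliptic} component (rather than merely in the stratum $\mathscr{H}(2,2)$) and the verification that the four exhibited maps are pairwise non-conjugate.
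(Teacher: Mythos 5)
Your plan diverges from the paper's route (the paper runs Veech's Rauzy--Veech construction: four explicit closed loops $\alpha_k,\beta_k,\gamma_k,\delta_k$ in the $7$-interval hyperelliptic Rauzy diagram, explicit primitive transition matrices, characteristic polynomial $(x^3-(2k+4)x^2+(k+4)x-1)(x^3-(k+4)x^2+(2k+4)x-1)(x-1)$, then Theorem~\ref{t:characterization}), and as written it has a genuine gap: nothing is actually constructed. The polygons, the two transverse cylinder decompositions, the inverse moduli $\mu_k,\nu_k$, the homological matrices, and above all the verification that the resulting dilatation has minimal polynomial exactly $x^3-(2k+4)x^2+(k+4)x-1$ are all announced as things you ``would'' do or ``expect''; but that verification is the entire content of the theorem. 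Worse, the announced starting point is structurally inconsistent with the target. If the edge lengths of $(X_k,\omega_k)$ are affine functions of $k$ (with fixed, in particular rational, coefficients), then the moduli of the two cylinder decompositions, hence $\mu_k\nu_k$ and $\operatorname{tr}M_k=2+\mu_k\nu_k$, are rational, so $\lambda_k$ is a quadratic unit and Corollary~\ref{c:quadratic} gives \emph{non}-vanishing SAF --- the opposite of what you want. More generally, by Kenyon--Smillie/Calta--Smillie the trace field of any surface carrying your $\phi_k$ must equal $\mathbb Q(\lambda_k+\lambda_k^{-1})=\mathbb Q(\lambda_k)$, a cubic field varying with $k$; so the surface's periods must themselves be built from the eigendata of the prescribed cubic (as in the paper, where $\zeta=\lambda+i\tau$ comes from the Perron eigenvectors of an explicitly computed matrix), and cannot be written down in advance with edge lengths in a field independent of $k$. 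In a Thurston--Veech set-up you would need two filling multicurve systems whose intersection data produce precisely $\mu_k\nu_k=\lambda_k+\lambda_k^{-1}-2$ on a genus-$3$ surface with the right singularity and hyperellipticity data; exhibiting such data for every $k\ge2$ is exactly the hard computation you defer.

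Two smaller points. First, your count of four uses $\phi_k^{\pm1}$ and $(\iota_k\phi_k)^{\pm1}$, which is a much weaker kind of multiplicity than the paper's four genuinely different Rauzy loops; in particular your method for separating $\phi_k$ from $\phi_k^{-1}$ (``marked separatrix data'') is not an argument, and $\iota_k\phi_k$, whose homology action is $-(\phi_k)_*$, no longer has $\lambda_k$ as a homological eigenvalue, so you should say in which sense it is being counted as an orientable example. Second, the final logical step is fine: once one has an orientable pseudo-Anosov in $\mathscr H^{\mathrm{hyp}}(2,2)$ whose homological characteristic polynomial contains the irreducible non-reciprocal factor $x^3-(2k+4)x^2+(k+4)x-1$ with $\lambda_k$ its largest root, irreducibility (your evaluation $p_k(\pm1)\neq0$ is the right check) plus Theorem~\ref{t:characterization} does give vanishing SAF. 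The gap is entirely in producing the maps.
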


In Subsection \S~\ref{ss:biPerronHomol} we apply a  construction of pseudo-Anosov homeomorphisms given by Margalit-Spallone \cite{MS}   to lend support to a conjecture about the set of all dilatations of pseudo-Anosov homeomorphisms.   Recall that a real  algebraic number  $\alpha$ greater than one is called {\em bi-Perron} if all of its conjugates (other than itself) lie in the annulus $\{ ||\alpha||^{-1} \le ||z|| < ||\alpha||\}$, where   $||z||$ denotes the norm of a complex number.   An algebraic integer, thus having minimal polynomial with integer coefficients,  is a {\em unit} if its inverse is also an algebraic integer.  Fried \cite{F} showed that the dilatation of any pseudo-Anosov map is a bi-Perron unit.  A conjecture that Farb-Margalit \cite{FM}  attribute to C.~McMullen (and is a question in \cite{F}), states that every bi-Perron unit is the dilatation of some pseudo-Anosov homeomorphism.     Recall that exactly when a pseudo-Anosov homeomorphism is orientable,  its dilatation is an eigenvalue of the homeomorphism's induced action on first integral homology.    The construction of Margalit-Spallone \cite{MS} shows that any polynomial that passes a certain homological criterion,  see below,  is the characteristic polynomial of the homology action induced by some pseudo-Anosov map.     Using this, we find a partial confirmation of the conjecture.

\bigskip
In Subsection~\S \ref{ss:BirmanQ},  we answer an implicit question of Birman, Brinkmann and Kawamuro   \cite{BBK}.   Namely,  if $\phi$ is an orientable pseudo-Anosov map on a genus $g$ compact surface without punctures, then their symplectic polynomial $s(x)$ associated to $\phi$ is  reducible if and only if either $\phi$ has vanishing SAF-invariant, or $\phi$ has trace field of degree less than $g$. 

\section{Background}

\subsection{Pseudo-Anosov map,  translation surface} Suppose that $X$ is an orientable closed real surface   of genus $g\ge 2$.
The Teichm\"uller modular group $\text{Mod}(X)$ is the quotient of the group of orientation preserving homeomorphisms by the subgroup of those homeomorphisms isotopic to the identity.   A mapping class $[\phi] \in \text{Mod}(X)$ is called {\em pseudo-Anosov},
if there exists a representative $\phi : X \to X$, a pair of invariant transverse measured (singular) foliations
$(\mathcal F^u, \mu^u), (\mathcal F^s, \mu^s)$, and a real number $\lambda$, the dilatation of $[\phi]$, such that $\phi$ multiplies
the transverse measure $\mu^u$ (resp. $ \mu^s$) by $\lambda$ (resp. $\lambda^{-1}$). The real number $\lambda = \lambda(\phi)$ is called the {\em dilatation} of the {\em pseudo-Anosov homeomorphism} $\phi$.  Some prefer to call $\lambda$ the {\em stretch factor} of $\phi$.   

A pseudo-Anosov homeomorphism  $\phi$ is called {\em orientable} if either of (and hence both)  $\mathcal F^u$ or $\mathcal F^s$ is orientable (that is,  leaves can be consistently oriented).    As \cite{LT} recall (see their Theorem 2.4),  a pseudo-Anosov homeomorphism $\phi$ is orientable if and only if its dilatation is an eigenvalue of the standard induced action on first homology $\phi_*: H_1(X, \mathbb Z) \to H_1(X, \mathbb Z)$. 

Orientability of either foliation is equivalent to  every singularity having an even number of prongs.   
Indeed, by Hubbard-Masur \cite{HM} the pair of measured foliations defines  a quadratic differential and a complex structure on $X$ so that this quadratic differential is holomorphic, orientability corresponds to the quadratic differential being the square of a holomorphic 1-form (thus, an {\em abelian differential}), say $\omega$.  Fixing base points and integrating $\omega$ along paths  defines local coordinates on $X$ (in $\mathbb C$ or $\mathbb R^2$, depending on our need),  transition functions are by translations, and the result is a {\em translation surface}, $(X, \omega)$.  (The aforementioned singularities occur at the zeros of $\omega$.)  The pseudo-Anosov $\phi$ acts affinely with respect to the local Euclidean structure of $(X, \omega)$.    Furthermore, taking the view of real local coordinates,   $\text{SL}_2(\mathbb R)$ acts on the collection of all translation surfaces  by post-composition with the local coordinate maps.   

We often use the words {\em pseudo-Anosov map} to mean an orientable pseudo-Anosov homeomorphism (usually with an emphasis on its translation surface).

\subsection{SAF-zero defined}   The Sah-Arnoux-Fathi (SAF) invariant was first defined for any interval exchange transformation (for more on these interval maps, see Subsection~\ref{ss:veechCon}). Given $f$ defined  on an interval $I = \cup_{j=1}^{d}\l,I_j$  and given piecewise by $f(x) = x + \tau_j$ on $I_j$,  the SAF-invariant of $f$ is the element of $\mathbb R\wedge_{\mathbb Q}\,\mathbb R$ given by $\sum_{j=1}^d\, \lambda_j \wedge \tau_j$, where $\lambda_j$ is the length of $I_j$.   In \cite{A2}, Arnoux showed that any linear flow on a translation surface defines a family of interval exchange maps, by taking 
any appropriately chosen full transversal of the flow, all having the same SAF-invariant.     One says that a pseudo-Anosov map has vanishing SAF-invariant if the flow in its stable direction has its first return interval exchange transformations with this property.  (Below we will show that this is then also true of the flow in the unstable direction.)
   
\subsection{Known examples}  Besides the Arnoux-Yoccoz family of SAF-zero pseudo-Anosov maps (one per genus at least three),  the other known infinite families are the Arnoux-Rauzy family in genus three discussed in \cite{LPV2},  and the examples of Calta and Schmidt \cite{CS} found by Fuchsian group techniques.    Sporadic examples were given by Arnoux-Schmidt \cite{AS} and in \cite{CS};   McMullen \cite{McCasc} presents an example in genus three found by Lanneau.

\subsection{Trace field, periodic direction field, Veech group} \label{ss:especiallyCaltaSmillie}  The {\em trace field}  of the translation surface $(X, \omega)$ of a pseudo-Anosov map of dilatation $\lambda$ coincides with $k = \mathbb Q(\lambda + \lambda^{-1})$, see the appendix of \cite{KS}.     If a translation surface has at least three directions of vanishing SAF-invariant, then Calta and Smillie  \cite{CSm} show that the surface can be normalized by way of the $\text{SL}_2(\mathbb R)$-action so that the directions with slope $0$, $1$ and $\infty$ have vanishing SAF-invariant.  They further  prove that on the normalized surface the set of slopes of directions  with vanishing SAF-invariant forms a field (union with infinity, thus more precisely the projective line over the field).  A translation surface so normalized is said to be in {\it standard form}, and the field so described is called the {\it periodic direction field}.  Calta-Smillie also show that when $(X, \omega)$ arises from a pseudo-Anosov map, then it can be placed in standard form,  and more importantly its trace field and periodic direction field coincide.     

The {\em Veech group} $\text{SL}(X, \omega) \subset \text{SL}_2(\mathbb R)$ is the group of matrix parts of (orientation-preserving) affine diffeomorphisms of $(X, \omega)$.  An affine diffeomorphism  of $(X, \omega)$ is  pseudo-Anosov if and only if its 
matrix part  is a hyperbolic element of $\text{SL}_2(\mathbb R)$, see \cite{T, V}.   Furthermore, if there is any such pseudo-Anosov map, then $\text{SL}(X, \omega) \subset \text{SL}_2(k)$, where $k$ is the trace field (this follows from the appendix of \cite{KS}:   the trace field is also the holonomy field and elements of the Veech group preserve the two dimensional $k$-vector space spanned by the holonomy vectors;  the statement also follows from Theorem~1.5 of \cite{CSm}).

\subsection{Homological criterion, Margalit-Spallone construction}  Margalit and Spallone \cite{MS} give a construction of  pseudo-Anosov classes in the Teichm\"uller modular group.  
Recall that a polynomial $p(x)=\sum_{i=0}^{n} c_i x^i$ is called \textit{reciprocal} when $c_i=c_{n+1-i}$ for all $i=1,...,n$.    (The characteristic polynomial of any symplectic matrix is monic reciprocal.) A monic reciprocal polynomial with integral coefficients is called {\em symplectically irreducible} if it is not the product of reciprocal polynomials of strictly lesser degree. 

  The {\em homological criterion} for a monic reciprocal polynomial $q(x)$ of even degree is that all of the following hold.
\begin{itemize}
\item $q(x)$ is symplectically  irreducible,
\item $q(x)$ is not cyclotomic, and 
\item $q(x)$ is not a polynomial in $x^k$ for any integral $k>1$.
\end{itemize}

Margalit-Spallone extend a result of  Casson-Bleiler:   
For any $f$ representing a class of the modular group of a closed surface $X$ of genus at least two,  let $q_f(x)$ be the characteristic polynomial for the action on first integral homology induced by $f$.   If $q_f(x)$ passes the homological criterion, then the class of $f$ is pseudo-Anosov.   Furthermore, by considering words in explicit elements of the modular group,  for any $q(x)$ passing the homological criterion Margalit-Spallone  build a homeomorphism $f$ whose homological action has characteristic polynomial $q(x)$.  Hence the class of $f$ (and indeed all of its Torelli group coset) is pseudo-Anosov.

\subsection{Veech construction}\label{ss:veechCon}   In this subsection we mainly reproduce Lanneau's \cite{L} overview (following \cite{MMY}) of Veech's construction of pseudo-Anosov homeomorphisms using the Rauzy-Veech induction,  \cite{V}.   In this subsection we follow standard convention and let $\lambda$ denote  the length vector for an interval exchange transformation. 
	
\subsubsection{Interval Exchange Transformation}
	An {\em interval exchange transformation} (IET) is a one-to-one map $T$ from an open interval $I$ to itself that permutes, by translation, a finite partition ${I_j , j = 1, . . . , d}$ of $I$ into $d \geq 2$ open subintervals. It is easy to see that $T$ is precisely determined by the following data: a permutation $\pi$ that encodes how the intervals are exchanged, and a vector $\lambda$ with positive entries that encodes the lengths of the intervals.
	
	It is useful to employ a redundant notation for IETs.     A permutation is a pair of one-to-one maps $(\pi_0, \pi_1)$ from a finite alphabet $\mathscr{A}$ to ${1, . . . , d}$ in the following way. In the partition of $I$ into intervals, we denote the interval labeled $k$, when counted from the left to the right, by $I_{\pi_0^{-1}(k)}$. Once the intervals are exchanged, the interval labeled $k$ is $I_{\pi_1^{-1}(k)}$. The permutation $\pi$ corresponds to the map $\pi = \pi_1 \circ \pi_{0}^{-1}$. The lengths of the intervals form a vector $\lambda = (\lambda_\alpha)$, $\alpha \in \mathscr{A}$. We will usually represent the combinatorial datum $\pi = (\pi_0, \pi_1)$ by a table:
\[ 
	\pi = 	
	\begin{pmatrix}
	\pi_0^{-1}(1) & \pi_0^{-1}(2) & ... & \pi_0^{-1}(d)\\ 
	\pi_1^{-1}(1) & \pi_1^{-1}(2) & ... & \pi_1^{-1}(d)
	\end{pmatrix}.
\]

It is reasonable to focus on  those IET that cannot trivially be decomposed into two distinct IETs.  For this,  a permutation $\pi$ is called reducible if for any $1 \le  k<d$,  $\pi_0^{-1}(\{1, \dots, k\}) = \pi_1^{-1}(\{1, \dots, k\})$.   Otherwise,  $\pi$ is called {\em irreducible}.  

\subsubsection{Suspension data}	
	A suspension datum for $T$ is a complex vector $\zeta$ of length $d$ such that
\begin{enumerate}	
	\item $\forall\alpha \in \mathscr{A}$, $\Re(\zeta_\alpha) = \lambda_\alpha$.\\
	\item $\forall 1 \leq k \leq d-1$, $\Im(\Sigma_{\pi_0(\alpha)\leq k} \zeta_\alpha)>0$. \\
	\item$\forall 1 \leq k \leq d-1$, $\Im(\Sigma_{\pi_1(\alpha)\leq k} \zeta_\alpha)<0$.
\end{enumerate}	
	
	To each suspension datum $\zeta$, we can associate a translation surface $(X, \omega) = X(\pi, \zeta)$ in the following way. Consider
the broken line $L_0$ on $\mathbb{C} = \mathbb{R}^2$ defined by
concatenation of the vectors $\zeta_{\pi_0^{-1}(j)}$ (in
this order) for $j = 1, . . . , d$ with starting point at the origin. Similarly, we consider the broken line $L_1$ defined by concatenation of the vectors $\zeta_{\pi_1^{-1}(j)}$ (in this order) for $j = 1, . . . , d$ with starting point at the origin. If the lines $L_0$ and $L_1$ have no intersections other than the endpoints, we can construct a translation surface $X$ by identifying each side $\zeta_j$ on $L_0$ with the side $\zeta_j$ on $L_1$ by a translation. The resulting surface is a translation surface endowed with the form $\omega = dz$. 

Let $I \subset X$ be the horizontal interval defined by $I = (0, \Sigma_\alpha \lambda_\alpha) \cup \{0\}$. Then the interval exchange map $T$ is precisely the one defined by the first return map to
$I$ of the vertical flow on $X$.

\subsubsection{Rauzy-Veech induction}
	The Rauzy-Veech induction $\mathscr{R}(T )$ of $T$ is defined
as the first return map of $T$ to a certain subinterval $J$ of $I$. 	
	We recall very briefly the construction. The type of $T$ is 0 if $\lambda_{\pi_0^{-1}(d)} > \lambda_{\pi_1^{-1}(d)}$ and 1 if $\lambda_{\pi_1^{-1}(d)} > \lambda_{\pi_0^{-1}(d)}$.  We say that the letter $\pi_0^{-1}(d)$ is the {\em winner} of this induction step, or that $\pi_0^{-1}(d)$ is, respectively.  We define a subinterval $J$ of $I$ by
	\[J=   \left\{
\begin{array}{ll}
      I \backslash T(\lambda_{\pi_1^{-1}(d)}) & \text{if $T$ is of type 0}; \\
      
      I \backslash \lambda_{\pi_0^{-1}(d)} & \text{if $T$ is of type 1}. \\
\end{array} 
\right. \]
The image of $T$ by the Rauzy-Veech induction $\mathscr{R}$ is defined as the first return map of $T$ to the subinterval $J$. This is again an interval exchange transformation, defined on $d$ letters. Thus this defines two maps $\mathscr{R}_0$ and $\mathscr{R}_1$ by $\mathscr{R}(T ) = (\mathscr{R}_\epsilon(\pi), \lambda')$, where $\epsilon$ is the type of $T$. The new
data and transition matrix are found as follows. 

\begin{enumerate}
\item If  $T$ has type 0, let $k$ be $\pi_1^{-1}(k)=\pi_0^{-1}(d)$ with $k\leq d-1$. Then $\mathscr{R_0}(\pi_0, \pi_1) = (\pi_0',\pi_1')$ where $\pi_0=\pi_0'$ and
	\[\pi_1'^{-1}(j)=   \left\{
\begin{array}{ll}
      \pi_1^{-1}(j) & \text{if $j \leq k$}; \\
      \pi_1^{-1}(d) & \text{if $j=k+1$};\\
      \pi_1^{-1}(j-1) & \text{otherwise}. \\
\end{array} 
\right. \]

\item If  $T$ has type 1, let $k$ be $\pi_0^{-1}(k)=\pi_1^{-1}(d)$ with $k\leq d-1$. Then $\mathscr{R_1}(\pi_0, \pi_1) = (\pi_0',\pi_1')$ where $\pi_1=\pi_1'$ and
	\[\pi_0'^{-1}(j)=   \left\{
\begin{array}{ll}
      \pi_0^{-1}(j) & \text{if $j \leq k$}; \\
      \pi_0^{-1}(d) & \text{if $j=k+1$};\\
      \pi_0^{-1}(j-1) & \text{other wise}. \\
\end{array} 
\right. \]

\item  The new lengths $\lambda'$ and $\lambda$ are related by a positive transition matrix $V_{\alpha\beta}$ with $V_{\alpha\beta}\lambda'=\lambda$.

\medskip

\item  If $T$ is of type 0 then let $(\alpha,\beta)=(\pi_0^{-1}(d),\pi_1^{-1}(d))$ otherwise let $(\alpha,\beta)=(\pi_1^{-1}(d),\pi_0^{-1}(d))$. With this notation,  $V_{\alpha\beta}$ is the  matrix $I+E_{\alpha\beta}$ where $E_{\alpha\beta}$  is the matrix whose only nonzero entry is at $(\alpha,\beta)$ where the value is 1.
\end{enumerate}

\medskip 
Iterating the Rauzy-Veech induction $n$ times, we obtain a sequence of transition matrices $\{V_k\}$. We can write $\mathscr{R}^{(n)}(\pi,\lambda)=(\pi^{(n)},\lambda^{(n)})$ with $(\Pi^n_{k=1}V_k)\lambda^{(n)}=\lambda$.

We can also define the Rauzy-Veech induction on the space of suspensions by 
\[	\mathscr{R}(\pi,\zeta) = (\mathscr{R}_\epsilon\pi, V^{-1}\zeta), \, \text{where}\; V =  \Pi^n_{k=1}V_k\,.
\]

If $(\pi',\zeta')=\mathscr{R}(\pi,\zeta)$ then the two translation surfaces $X(\pi,\zeta)$ and $X(\pi',\zeta')$ are isometric,  i.e. they define the same surface in the moduli space.

For a combinatorial datum $\pi$,  we call the {\em  Rauzy class} of $\pi$ the set of all combinatorial data that can be obtained from $\pi$ by the combinatorial Rauzy moves.  The   {\em labeled Rauzy diagram} of $\pi$ is the directed 
graph whose vertices are all combinatorial data that can be obtained from $\pi$ by the combinatorial Rauzy moves. From each vertices, there are two directed outgoing edges labeled 0 and 1 (the type) corresponding to the two combinatorial Rauzy moves.

\subsubsection{Closed loops and pseudo-Anosov homeomorphisms}  We are now ready to describe Veech's construction of pseudo-Anosov homeomorphisms. Let $\pi$ be an irreducible permutation and let $\gamma$ be a closed loop in the Rauzy diagram associated to $\pi$.   We obtain the matrix $V$ as above; let us assume that $V$ is primitive (i.e. there exists $k$ such that for all $i$, $j$, the $(i, j)$ entry of $V^k$ is positive) and let $\theta > 1$ be its Perron-Frobenius eigenvalue. We choose a positive eigenvector $\lambda$ for $\theta$. Now,  $V$ is appropriately symplectic (see \cite{Vi} for an explanation of this result of \cite{V}), allowing one to choose  $\tau$ an eigenvector for the eigenvalue $\theta^{-1}$ with $\tau_{\pi_0^{-1}(1)}>0$.  We form the
vector $\zeta =  \lambda + i \tau$. We can show that $\zeta$ is a suspension data for $\pi$. Thus, with a minor abuse of notation, 
\[
	\mathscr{R}(\pi,\zeta)=(\pi,V^{-1}\zeta)= (\pi, V^{-1}\lambda,V^{-1}\tau)=(\pi,\theta^{-1}\lambda,\theta\tau)=g_t(\pi,\lambda,\tau)
\]
where $t=\log(\theta)>0$.

	The two surfaces $X(\pi, \zeta)$ and $g_tX(\pi, \zeta)$ differ by some element of the mapping class group. In other words there exists a pseudo-Anosov homeomorphism $\phi$, with respect to the translation surface $X(\pi, \theta)$, such that $D\phi = g_t$. In particular the dilatation of $\phi$ is $\theta$. Note that by construction $\phi$ fixes the zero on the left of the interval I and also the separatrix adjacent to this zero (namely the interval I).    Veech \cite{V} proved the following.

\begin{Thm}[Veech] \label{t:Veech}
Let $\gamma$ be a closed loop, beginning at the vertex corresponding to $\pi$, in a unlabeled Rauzy
diagram and $V$ be the associated transition matrix. If $V$ is primitive, then  let $\lambda$ be a positive eigenvector for the Perron eigenvalue $\theta$ of $V$ and $\tau$ be an eigenvector (with $\tau_{\pi_0^{-1}(1)}>0$) for the eigenvalue $\theta^{-1}$ of $V$. We have\\
(1) $\zeta = \lambda + i  \tau$ is a suspension datum for $T = (\pi, \lambda)$;\\
(2) The matrix $A = \begin{pmatrix} \theta^{-1} & 0 \\ 0 & \theta \end{pmatrix}$ is the derivative map of a pseudo-Anosov homeomorphism
$\phi$ on $X(\pi, \zeta)$;\\
(3) The dilatation of $\phi$ is $\theta$;\\
(4) Up to conjugation, all orientable pseudo-Anosov homeomorphisms fixing a separatrix are obtained by this construction.  
\end{Thm}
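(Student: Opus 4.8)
\medskip\noindent\emph{Proof proposal.} The statements (1)--(3) are almost entirely bookkeeping around the Rauzy--Veech formalism of \S\ref{ss:veechCon}; the content is in (4). For (1), $\Re\zeta_\alpha=\lambda_\alpha$ is automatic, so only the imaginary-part conditions matter, and $\zeta=\lambda+i\tau$ is a suspension datum for $(\pi,\lambda)$ precisely when $\tau$ lies in the open cone
\[ \Theta_\pi=\Bigl\{\,h\in\mathbb R^{\mathscr{A}}\ :\ \textstyle\sum_{\pi_0(\alpha)\le k}h_\alpha>0\ \text{and}\ \sum_{\pi_1(\alpha)\le k}h_\alpha<0\ \text{ for }1\le k\le d-1\,\Bigr\}, \]
and for such $\tau$ the broken lines $L_0,L_1$ meet only at their endpoints, so $X(\pi,\zeta)$ is defined. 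So I would show $\tau\in\Theta_\pi$. The point is that Rauzy--Veech induction carries suspension data to suspension data, since it merely cuts and re-glues into an isometric surface; tracing the elementary moves of \S\ref{ss:veechCon} through this shows each elementary $V_k^{-1}$ maps $\Theta_{\pi^{(k-1)}}$ into $\Theta_{\pi^{(k)}}$, so around the closed loop $\gamma$ one gets $V^{-1}\,\overline{\Theta_\pi}\subseteq\overline{\Theta_\pi}$, with image in the interior once $V$ (hence $V^{-1}$) is primitive. A Perron--Frobenius argument for a map strictly carrying a salient cone into itself then produces a unique eigenray of $V^{-1}$ inside $\overline{\Theta_\pi}$; since $V$ is symplectic its spectrum is stable under $\mu\mapsto\mu^{-1}$, so $\theta^{-1}$ is a simple eigenvalue paired with the Perron eigenvalue $\theta$, and comparing eigenvalues identifies that ray with the $\theta^{-1}$-eigenspace of $V$. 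The normalization $\tau_{\pi_0^{-1}(1)}>0$ is just the $k=1$ inequality defining $\Theta_\pi$, so it picks out the correct generator; symmetrically $\lambda$ is forced into the positive orthant as the Perron eigenvector of $V$.

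Given (1), form $X=X(\pi,\zeta)$ with $\omega=dz$, so $T=(\pi,\lambda)$ is the first-return map of the vertical flow to $I$. Running Rauzy--Veech induction around $\gamma$ gives $\mathscr R^{(\gamma)}(\pi,\zeta)=(\pi,V^{-1}\zeta)$, and the eigenvector relations give $V^{-1}\zeta=\theta^{-1}\lambda+i\,\theta\tau$, i.e.\ $\mathscr R^{(\gamma)}(\pi,\zeta)=g_t(\pi,\lambda,\tau)$ with $g_t=\mathrm{diag}(\theta^{-1},\theta)=A$ and $t=\log\theta$. On the one hand $A\cdot X(\pi,\zeta)=X(\pi,A\cdot\zeta)=X(\pi,V^{-1}\zeta)$ (post-composing the charts with $A$); on the other hand $X(\pi,\zeta)$ and $X(\pi,V^{-1}\zeta)$ represent the same point of the stratum, so there is a translation isometry $\psi:X(\pi,V^{-1}\zeta)\to X(\pi,\zeta)$. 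Then $\phi:=\psi\circ A$ is an affine self-diffeomorphism of $X$ with $D\phi=A$. Since $\mathrm{tr}\,A=\theta+\theta^{-1}>2$, $A$ is hyperbolic, so by the criterion recalled in \S\ref{ss:especiallyCaltaSmillie} $\phi$ is pseudo-Anosov; its invariant foliations are the (orientable) horizontal and vertical foliations of $\omega$, and its dilatation is $\theta$. By construction $\phi$ fixes the zero at the left end of $I$ and the separatrix $I$. This gives (2) and (3).

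For (4), let $\phi$ be an orientable pseudo-Anosov on $(X,\omega)$ fixing a separatrix. After an $\mathrm{SL}_2(\mathbb R)$-adjustment, assume its foliations are horizontal and vertical, $D\phi=\mathrm{diag}(\theta^{-1},\theta)$, and the fixed separatrix is the positive horizontal ray $R$ from a zero $p$ of $\omega$. I would take $I=[p,q)\subset R$ with $q$ a point at which a vertical separatrix of a zero meets $R$; then the first-return map $T$ of the vertical flow to $I$ is an IET $(\pi,\lambda)$, and --- a pseudo-Anosov having no vertical saddle connections --- Rauzy--Veech induction from $I$ runs indefinitely, producing nested subintervals $I=I^{(0)}\supset I^{(1)}\supset\cdots$, all with left endpoint $p$, whose right endpoints descend through the points of $R$ lying on vertical separatrices of zeros. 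Now $\phi(I)=[p,\theta^{-1}q)$ and $\theta^{-1}q=\phi(q)$ again lies on a vertical separatrix of a zero (the $\phi$-image of the one through $q$), so $\phi(I)=I^{(n)}$ for some $n$. Since $\phi$ conjugates the vertical flow to its time-$\theta$ rescaling and acts on $R$ by $x\mapsto\theta^{-1}x$, the return map to $\phi(I)$ is the IET $(\pi,\theta^{-1}\lambda)$; comparing it with the data of $I^{(n)}$ shows the $n$ Rauzy steps form a closed loop $\gamma$ at $\pi$ whose transition matrix $V=V_1\cdots V_n$ satisfies $V\lambda=\theta\lambda$. Hence $\lambda$ is the Perron eigenvector of the (necessarily primitive) integer matrix $V$ for the eigenvalue $\theta$, the vertical structure of $(X,\omega)$ supplies the $\theta^{-1}$-eigenvector $\tau$, and Veech's construction for $(\pi,\gamma)$ rebuilds $(X,\omega)$ together with $\phi$, up to conjugacy.

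I expect the real obstacle to be the geometric input in (4): that the right endpoints of the Rauzy--Veech subintervals of $I$ are exactly the points of $R$ on vertical separatrices of zeros, descending one at a time, so that $\phi(I)$ is literally one of the $I^{(n)}$ and the induction closes into a loop. This is a statement about how Rauzy--Veech induction reads the vertical foliation, and is where the minimality and absence of saddle connections of a pseudo-Anosov genuinely enter; by contrast the arguments for (1)--(3) are essentially formal, the one real nuisance being the cone bookkeeping in (1) --- keeping straight which of $V$, $V^{-1}$, ${}^tV$ acts on which cone $\Theta_{\pi^{(k)}}$ across an elementary Rauzy move.
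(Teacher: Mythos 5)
The paper does not prove this theorem: it is stated as Veech's result, quoted from \cite{V} with the surrounding formalism taken from Lanneau's overview \cite{L} (following \cite{MMY}), so there is no internal argument to compare yours against; what you have written is a roadmap of the standard proof. Your most delicate assertion in (1) --- that each elementary Rauzy move carries the suspension cone into the suspension cone --- is in fact correct and admits a one-line verification: for a type-$0$ move with winner $\alpha=\pi_0^{-1}(d)$, loser $\beta=\pi_1^{-1}(d)$ and $k=\pi_1(\alpha)$, all the inequalities for $\tau'=V_\epsilon^{-1}\tau$ coincide with old ones except the new bottom partial sum at position $k$, which equals $\sum_{\pi_1(\gamma)\le k-1}\tau_\gamma-\sum_{\pi_0(\gamma)\le d-1}\tau_\gamma+\sum_{\pi_1(\gamma)\le d-1}\tau_\gamma<0$; the type-$1$ case is symmetric. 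Hence $V^{-1}\Theta_\pi\subseteq\Theta_\pi$ around a closed loop, as you claim.

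Two steps, however, are asserted rather than proved. First, ``image in the interior once $V$ (hence $V^{-1}$) is primitive'' is not meaningful as written: $V^{-1}$ has negative entries, so primitivity does not transfer, and a cone-invariance (Krein--Rutman type) argument only produces an eigenvector of $V^{-1}$ in the \emph{closed} cone with eigenvalue $\rho(V^{-1})$. You still must show that this eigenray lies in the open cone (the strict inequalities are exactly what make $\zeta=\lambda+i\tau$ a suspension datum) and that $\rho(V^{-1})=\theta$; the latter uses not literal symplecticity of $V$ but the fact, recalled in the paper, that its spectrum is a reciprocally paired degree-$2g$ part together with roots of unity coming from relative homology. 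Veech's argument sidesteps both issues by transferring to the height vectors $h=-\Omega_\pi\tau$, on which the transposed matrices act non-negatively, so classical Perron--Frobenius applies directly; you should either do that or supply the interiority argument for $V^{-1}$. Second, and more seriously, part (4) is where the substance of the theorem lies, and your outline leaves precisely that substance open, as you acknowledge: that the right endpoints of the inducing intervals sweep exactly the intersections of $I$ with incoming vertical separatrices, so that $\phi(I)$ is literally some $I^{(n)}$, that the induced loop has \emph{primitive} transition matrix, and that the identification is canonical up to conjugation. Without these, the proposal is a correct sketch of the known proof rather than a proof; the missing steps are exactly what one cites \cite{V} (or the expositions \cite{Vi}, \cite{MMY}) for.
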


We note that  reversing the path $\gamma$, i.e. switching types 0 and 1 throughout, results in the inverse pseudo-Anosov homeomorphism. 

\subsubsection{Hyperelliptic Rauzy diagrams}
Our new families of examples of pseudo-Anosov maps with vanishing SAF-invariant are constructed using hyperelliptic diagrams.

Up to now, we have discussed labeled  IETs.   An unlabeled IET is one for which we retain only combinatorial data in the form of  a permutation of $\{1, . . . , d\}$.     Equivalent classes of unlabeled IETs  are obtained after identifying $(\pi_0,\pi_1)$ with $(\pi_0',\pi_1')$ if $\pi_1 \circ \pi_0^{-1}=\pi_1' \circ \pi_0'^{-1}$.  (See Viana \cite{Vi}, for a discussion of this, where the key term is ``monodromy".) From this, the labeled Rauzy diagram is a covering of the so-called unlabeled Rauzy diagram.   

An interval exchange transformation $T$ is called {\em hyperelliptic} if the corresponding permutation is such that 
$\pi_1 \circ \pi_0^{-1}(i)= d+1-i, \forall i = 1,...,d$.    A particular example of such a combinatorial datum is $\varepsilon_d := \begin{pmatrix} 1&2&\cdots&d\\d&d-1&\cdots & 1\end{pmatrix}$, with corresponding monodromy permutation $(d, d-1, \dots, 1)$.   A {\em hyperelliptic Rauzy diagram} is one that contains a combinatorial datum $\pi$ of a hyperelliptic IET.   Exactly when a Rauzy diagram is hyperelliptic, the labeled and unlabeled diagrams are isomorphic directed graphs.  See Figure~\ref{f:unlabHyper4} for the unlabeled Rauzy diagram with four subintervals.

\begin{figure}[h]
\scalebox{.4}{
\includegraphics{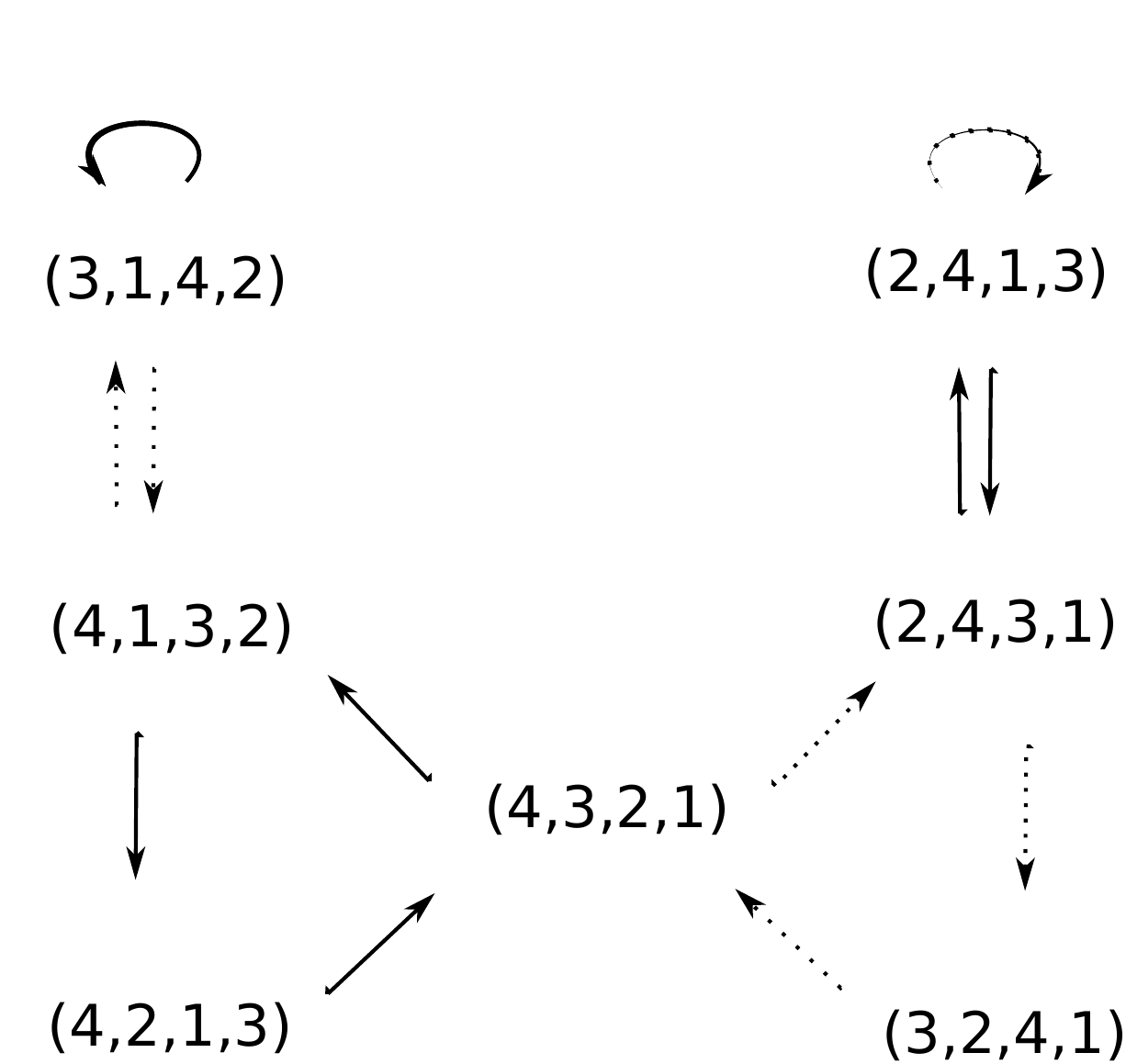}
}
\caption{Unlabeled hyperelliptic Rauzy diagram with 4 subintervals. Here and throughout, type 1 moves are shown by dotted lines, type 0 by solid.}
\label{f:unlabHyper4}
\end{figure}

  In our examples,  we  always choose the ``central"  vertex of the hyperelliptic diagram at hand to be the initial vertex of our path.  (The resulting pseudo-Anosov map is a conjugate of that given from taking any other initial vertex along the path.)  The following justifies that normalization, confer Figures~\ref{f:unlabHyper4}, \ref{f:VeechHep}, \ref{f:hypSevenGraph}.
  
  . 
  
\begin{Lem}\label{l:hypDiagram}   Suppose that $\gamma$ is a closed path in a hyperelliptic Rauzy diagram such that the corresponding transition matrix $V = V(\gamma)$ is primitive.   Then $\gamma$ must pass through the vertex corresponding to $\varepsilon_d$. 
\end{Lem}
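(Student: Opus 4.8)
The plan is to convert primitivity of $V = V(\gamma)$ into a statement about which letters arise as winners of the Rauzy--Veech steps along $\gamma$, and then to use the special shape of a hyperelliptic Rauzy diagram. The underlying elementary fact about the transition matrices is this: writing the steps of $\gamma$ in order, $V = \prod_i (I + E_{\alpha_i\beta_i})$, where $\alpha_i$ is the winner and $\beta_i$ the loser of the $i$-th step. If some letter $c$ is the winner of no step of $\gamma$, so $\alpha_i \neq c$ for all $i$, then $e_c^{\top}(I + E_{\alpha_i\beta_i}) = e_c^{\top}$ for each $i$, hence the $c$-th row of $V$, and of every power $V^N$, equals $e_c^{\top}$; since $d \ge 2$ no power of $V$ is positive, so $V$ is not primitive. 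Consequently, if $V(\gamma)$ is primitive then every letter is the winner of some step of $\gamma$.

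The structural input needed is the following. First, the labels $\pi_0^{-1}(1)$ and $\pi_1^{-1}(1)$ are constant along the whole Rauzy diagram --- this is immediate from the move formulas, since a type-$0$ move fixes $\pi_0$ and only rearranges the bottom row in positions $\ge 2$, and dually for type-$1$; with the labelling of $\varepsilon_d$ fixed as in the statement these constant labels are $1$ and $d$. Second, and this is the crux, deleting the vertex $\varepsilon_d$ from the underlying undirected graph of the hyperelliptic Rauzy diagram disconnects it into exactly two pieces $\mathcal{D}_0$, $\mathcal{D}_1$, where $\mathcal{D}_\epsilon$ is the set of vertices reachable from the type-$\epsilon$ out-neighbour of $\varepsilon_d$ without revisiting $\varepsilon_d$; and no Rauzy step issuing from a vertex of $\mathcal{D}_0$ has $1$ as winner, while no step issuing from a vertex of $\mathcal{D}_1$ has $d$ as winner. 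Since a type-$0$ step always has winner $\pi_0^{-1}(d)\neq 1$ and a type-$1$ step always has winner $\pi_1^{-1}(d)\neq d$, the only content here concerns type-$1$ steps from $\mathcal{D}_0$ and type-$0$ steps from $\mathcal{D}_1$; equivalently, no vertex of $\mathcal{D}_0$ carries $1$ in the last position of its bottom row, and no vertex of $\mathcal{D}_1$ carries $d$ in the last position of its top row. I would establish this using the order-two symmetry of the diagram that interchanges the two move types: it fixes $\varepsilon_d$ and swaps $\mathcal{D}_0$ with $\mathcal{D}_1$ and the labels $1$ and $d$, so only one side need be analysed, and one checks the claim for that side either by an explicit recursive description of the hyperelliptic diagrams or by an induction on the number of intervals, the cases $d=2$ (one vertex with two self-loops) and $d=3$ ($\mathcal{D}_0,\mathcal{D}_1$ each a single vertex with one self-loop) being immediate.

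Granting these, the lemma follows quickly. For $d=2$ it is trivial, so take $d\ge 3$ and let $\gamma$ be a closed path avoiding $\varepsilon_d$. As a closed walk $\gamma$ is connected in the underlying graph, so by the structural input it lies entirely inside $\mathcal{D}_0$ or entirely inside $\mathcal{D}_1$. In the first case the letter $1$ is the winner of no step of $\gamma$, and in the second the letter $d$ is; either way the first paragraph shows $V(\gamma)$ is not primitive. Contrapositively, if $V(\gamma)$ is primitive then $\gamma$ must pass through $\varepsilon_d$.

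The main obstacle is the structural claim that $\varepsilon_d$ is a cut vertex of the hyperelliptic Rauzy diagram and that on each of the two resulting components one of the two invariant leftmost labels never wins. This is transparent in low dimensions and the type-swap symmetry halves the work, but a uniform proof requires a careful description of how the hyperelliptic Rauzy diagram on $d$ intervals is assembled --- or, alternatively, a monovariant tracking the position of the label $1$ in the bottom row as one moves within $\mathcal{D}_0$.
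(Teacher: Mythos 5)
Your first paragraph is correct and is exactly the reduction the paper makes: if a letter $c$ never wins along $\gamma$, then row $c$ of every factor $I+E_{\alpha_i\beta_i}$, hence of $V(\gamma)$ and of all its powers, equals $e_c^{\top}$, so $V(\gamma)$ cannot be primitive; the paper gets the statement ``every letter is a winner on $\gamma$'' by citing Yoccoz (as reproduced by Viana), so your direct matrix argument is a legitimate, self-contained replacement. The invariance of $\pi_0^{-1}(1)$ and $\pi_1^{-1}(1)$ along the diagram, and the consequence that the letter $1$ can win only type-$1$ steps and $d$ only type-$0$ steps, are also correct and are part of the paper's argument.

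The gap is the structural claim you yourself flag as the main obstacle and never prove: that deleting $\varepsilon_d$ separates the hyperelliptic diagram into two pieces $\mathcal D_0$, $\mathcal D_1$ on which $1$, respectively $d$, never wins. You verify it only for $d\le 3$ and list possible strategies (recursive description of the diagrams, the type-swap symmetry, a monovariant) without carrying any of them out; yet this claim is the entire content of the lemma, since the reduction of the first paragraph is the easy half. For comparison, the paper's combinatorial input is the more local statement that $1$ wins exactly at the $d-1$ vertices of the pure type-$1$ cycle through $\varepsilon_d$ (those whose bottom row is $d,d-1,\dots,1$ --- the bottom row being unchanged by type-$1$ moves), and dually $d$ wins exactly on the pure type-$0$ cycle, the two cycles meeting only at $\varepsilon_d$. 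To complete your write-up you must actually establish a statement of this kind: either prove your cut-vertex claim (e.g.\ via the known recursive structure of hyperelliptic Rauzy diagrams), or prove the ``exactly these two cycles'' statement and then add the connectivity observation that your formulation makes explicit, so that a closed path on which both $1$ and $d$ win is forced through the common vertex. As it stands, the proposal proves the easy implication and postulates the hard one.
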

\begin{proof} Since $\gamma$ is a closed path,  repeating it defines an infinite path in the Rauzy diagram.  By a result of Yoccoz \cite{Y}, which \cite{Vi} reproduces, every letter must hence be a winner on $\gamma$.    By the combinatorics of the induction,  the first letter on the top row of $\varepsilon_d$ is a winner exactly at the vertices forming a cycle of length $d-1$ in the hyperelliptic Rauzy diagram, similarly for the first letter on its bottom row.   Of course these cycles meet at the vertex corresponding to $\varepsilon_d$.   Therefore,  $\gamma$ must pass through this vertex. 
\end{proof}

\subsection{Components of strata and Rauzy classes}  In particular to allow experts to immediately understand the setting of our examples, we entitle certain subsections below with reference to particular components of strata of abelian differentials.   Here we briefly summarize the notation and related notions. 

Let $g \ge 2$ be the genus of the Riemann surface $X$, the non-zero abelian differentials on $X$ have zeros whose multiplicities sum to $2g-2$.   Let $\kappa$ be a partition of $2g-2$, the {\em stratum} $\mathscr H(\kappa)$ is the set (modulo the action of the mapping class group) of abelian differentials whose zeros have the multiplicities of $\kappa$.    

Computations by Veech and then Arnoux using Rauzy classes showed that in general strata have more than one connected component.   Kontsevich and Zorich   \cite{KZ} determined all possible components.   They showed that  any stratum has at most three components: there may be a hyperelliptic component where both $X$ is hyperelliptic and the hyperelliptic involution preserves $\omega$; and possibly two more components, differentiated by the parity of an appropriate notion of spin, these components are thus called ``even" and ``odd", correspondingly. One denotes the various components by $\mathscr H^{\text{hyp}}(\kappa)$, $\mathscr H^{\text{even}}(\kappa)$ and $\mathscr H^{\text{odd}}(\kappa)$.

   Our examples are in low genus, thus we recall only (part of) the second theorem of \cite{KZ}:  Each of $\mathscr H(2)$ and $\mathscr H(1,1)$ is connected (and coincides with its hyperelliptic component), while  each of $\mathscr H(4)$ and $\mathscr H(2,2)$ has two connected components:  a hyperelliptic component, and an odd spin component.   
   
    Each Rauzy class corresponds to a single component (see \cite{B} for details on this correspondence), and indeed one finds that the number of intervals $d$ is equal to $2 g + \sigma - 1$, where $\sigma$ equals the total number of zeros of the corresponding abelian differentials.  This accords with the fact that local coordinates on $\mathscr H(\kappa)$ are given by period coordinates, which one can view as the integration of $\omega$ over a basis of relative homology $H_1(X, \Sigma, \mathbb C)$, where $\Sigma$ is the set of zeros of $\omega$.  One can take the basis to be the union of an integral symplectic basis of absolute homology with a set of paths from a chosen zero to each of the other zeros.   
    
     The transition matrix $V = V(\gamma)$ for a closed path gives the action of the element of the mapping class group on relative homology.  In the pseudo-Anosov case, there is some power of the map that fixes all of $\Sigma$ and hence this power changes any path connecting zeros by an element of absolute homology.   On absolute homology,  the pseudo-Anosov (and perforce any of its powers) acts integrally symplectically, thus the action on relative homology of the power decomposes naturally into a block form with the block corresponding to pure relative homology being an identity.  Thus, the characteristic polynomial of this action is the product of a reciprocal degree $2g$ polynomial times a power of $(x-1)$.   Therefore, the action of the original pseudo-Anosov has a similar decomposition, as seen in our examples below.

\section{Characterization of vanishing SAF invariant,   implications}    We aim to prove that a pseudo-Anosov map has vanishing SAF-invariant exactly when an algebraic condition holds; we thus naturally first gather some algebraic results.

\subsection{Galois theory} 
We begin with a result using elementary Galois theory.  

\begin{Prop}\label{p:recipOrElse}  Suppose that $\alpha$ is a non-zero (irrational) algebraic number.   The minimal polynomial of $\alpha$ over $\mathbb Q$ is reciprocal  if and only if  $\mathbb Q(\alpha) \neq \mathbb Q(\alpha + \alpha^{-1})$.   
\end{Prop}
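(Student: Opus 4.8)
The plan is to pivot on the degree of the field extension $\mathbb{Q}(\alpha)/\mathbb{Q}(\alpha+\alpha^{-1})$. First I would observe that $\alpha$ is always a root of the quadratic $t^2 - (\alpha+\alpha^{-1})t + 1$ over the field $k := \mathbb{Q}(\alpha+\alpha^{-1})$, so $[\mathbb{Q}(\alpha):k] \in \{1,2\}$; the condition $\mathbb{Q}(\alpha) \neq k$ is exactly the condition that this degree equals $2$, i.e. that $\alpha \notin k$. So the proposition reduces to showing: the minimal polynomial $m(x)$ of $\alpha$ over $\mathbb{Q}$ is reciprocal if and only if $\alpha$ has degree $2$ over $k$.

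For the direction assuming reciprocity: if $m(x)$ is reciprocal of degree $n$, then $m(\alpha)=0$ implies, after dividing by $\alpha^{n/2}$ when $n$ is even (or handling parity via $\alpha^n m(1/x)$-type manipulation), that $1/\alpha$ is also a root of $m(x)$; more usefully, a reciprocal polynomial of even degree $n=2r$ can be written as $\alpha^r \cdot P(\alpha+\alpha^{-1})$ for a polynomial $P$ of degree $r$ with rational coefficients (the standard "palindromic substitution" $y = x + x^{-1}$). Hence $\alpha + \alpha^{-1}$ is a root of $P$, so $[k:\mathbb{Q}] \le r = n/2$, which forces $[\mathbb{Q}(\alpha):k] \ge 2$, hence $=2$, giving $\mathbb{Q}(\alpha)\neq k$. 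I should also dispatch the case of odd degree: a reciprocal polynomial of odd degree has $-1$ as a root, hence is divisible by $x+1$; since $m$ is irreducible this forces $m(x) = x+1$ and $\alpha = -1$, which is rational, contradicting the irrationality hypothesis — so the odd case does not arise.

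For the converse, assume $\mathbb{Q}(\alpha) \neq k$, i.e. $[\mathbb{Q}(\alpha):k]=2$. Then $\alpha$ and its "Galois conjugate over $k$", namely $\alpha^{-1}$ (the other root of $t^2-(\alpha+\alpha^{-1})t+1$), are related by a $k$-automorphism, or more concretely: every embedding $\sigma:\mathbb{Q}(\alpha)\hookrightarrow \mathbb{C}$ restricts to an embedding of $k$, and for each embedding of $k$ there are exactly two extensions to $\mathbb{Q}(\alpha)$, sending $\alpha$ to the two roots of the corresponding quadratic $t^2 - \sigma(\alpha+\alpha^{-1})t+1$, which are $\sigma(\alpha)$ and $\sigma(\alpha)^{-1}$. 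Therefore the full set of $\mathbb{Q}$-conjugates of $\alpha$ is closed under $x \mapsto x^{-1}$, and this set has size $n = [\mathbb{Q}(\alpha):\mathbb{Q}]$. Since $m(x) = \prod (x - \alpha_i)$ over the conjugates $\alpha_i$, and the map $\alpha_i \mapsto \alpha_i^{-1}$ is a bijection of this multiset (none of the $\alpha_i$ is zero as $\alpha\neq 0$), we get $x^n m(1/x) = \prod(1 - \alpha_i x) = \big(\prod(-\alpha_i)\big)\prod(x - \alpha_i^{-1}) = \pm m(x)$, with the sign being $\prod \alpha_i = \pm 1$ forced because reversing the coefficient list of a monic polynomial and re-normalizing must again give $m$; checking the leading/constant coefficients pins the sign to $+1$ (the constant term $m(0) = \prod(-\alpha_i)$ must equal the leading coefficient $1$, since $m$ reciprocal), hence $m$ is reciprocal.

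The main obstacle I anticipate is bookkeeping the sign and the parity: making the "palindromic substitution" $y=x+x^{-1}$ rigorous (it is only literally a polynomial substitution after clearing $\alpha^r$, so one must argue that the resulting $P \in \mathbb{Q}[y]$ genuinely has $\alpha+\alpha^{-1}$ as a root and that $\deg P = r$), and confirming in the converse that $x^n m(1/x) = +m(x)$ rather than $-m(x)$ — equivalently ruling out "anti-reciprocal" behavior — which comes down to noting $m(0)=\prod(-\alpha_i)$ cannot be $-1$ and simultaneously make $x^nm(1/x)$ monic matching $m$, or more cleanly: since $\alpha^{-1}$ is a root of $m$ and $m$ is irreducible and monic, $m$ is also the minimal polynomial of $\alpha^{-1}$, forcing $x^n m(1/x)/m(0) = m(x)$; combined with $m(0)\cdot(\text{leading coeff}) $ considerations and $m(0) = \pm 1$ we can only have $m(0) = 1$ unless $\alpha = \pm 1$. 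Handling the edge cases $\alpha = \pm 1$ (excluded by irrationality) keeps everything clean.
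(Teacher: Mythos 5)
Your proposal is correct, but it routes the two implications differently than the paper does, even though both arguments ultimately pivot on the same two facts: the quadratic $t^2-(\alpha+\alpha^{-1})t+1$ forces $[\mathbb Q(\alpha):k]\le 2$ for $k=\mathbb Q(\alpha+\alpha^{-1})$, and reciprocity cuts the relevant degree in half. The paper works with the minimal polynomial $q$ of the \emph{trace} $\alpha+\alpha^{-1}$ and the auxiliary reciprocal polynomial $\tilde q(x)=x^nq(x+x^{-1})$: the implication ``non-reciprocal $\Rightarrow$ fields equal'' comes from $p\mid\tilde q$ together with the degree dichotomy $\deg p\in\{n,2n\}$, and ``reciprocal $\Rightarrow$ fields unequal'' is proved by contradiction, counting that a reciprocal $p$ yields at most $n/2$ distinct values $\beta+\beta^{-1}$, too few to be the roots of $q$. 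You instead work with the minimal polynomial of $\alpha$ itself: the palindromic substitution $m(x)=x^rP(x+x^{-1})$ gives ``reciprocal $\Rightarrow$ unequal'' directly (your odd-degree dispatch via the root $-1$ is exactly the paper's Lemma on reciprocal polynomials of odd degree), and for the converse you note that inequality of the fields makes $t^2-(\alpha+\alpha^{-1})t+1$ irreducible over $k$, so $\alpha^{-1}$ is a $\mathbb Q$-conjugate of $\alpha$, the root set is closed under inversion, and $x^nm(1/x)=\pm m(x)$ with the anti-reciprocal sign excluded (it would force $m(1)=0$, impossible for an irreducible polynomial of degree $\ge 2$). What each approach buys: the paper's $\tilde q$ is reciprocal by construction, so it needs no sign analysis at all, whereas your version gives direct (non-contrapositive) proofs in both directions and exposes the structural reason for reciprocity, namely inversion-closure of the conjugates. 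Two cosmetic points to clean up if you write this out: the parenthetical ``the constant term must equal the leading coefficient, since $m$ reciprocal'' is circular as phrased (your later formulation via $m$ being the minimal polynomial of $\alpha^{-1}$, $m(0)^2=1$, and exclusion of $m(0)=-1$ is the right one), and ``$\sigma(\alpha)$'' for an embedding $\sigma$ of $k$ is an abuse of notation --- what you need, and what is true, is that the two roots of the image quadratic are mutually inverse since their product is $1$.
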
 
\begin{proof} 
Let  $p(x) \in \mathbb Q[x]$  be the minimal polynomial of $\alpha$.     Let $q(x) \in \mathbb Q[x]$  be the minimal polynomial of $\alpha + \alpha^{-1}$.  Denote the degree of  $q(x)$ by $n$.  Since $\alpha$ satisfies $x^2 - (\alpha + \alpha^{-1})x + 1$ and of course $\mathbb Q(\alpha) \supseteq \mathbb Q(\alpha + \alpha^{-1})$, the degree of $p(x)$ is either $n$ or $2n$. \\

\noindent
$(\Leftarrow)$
 Set $\tilde q(x) = x^n q(x+ x^{-1})$.  Then $\tilde q(x) \in \mathbb Q[x]$ is monic of degree $2n$.  Of course,   $\tilde q(x)$ has $\alpha$  as a root.  Therefore,  $p(x)$ divides $\tilde q(x)$, and by the restrictions on the degree of $p(x)$,  either $p(x) =    \tilde q(x)$ or else $p(x)$ has degree $n$.   If $p(x)$ is not reciprocal, then it cannot equal $\tilde q(x)$, as this latter is clearly reciprocal; it then follows that  $n= [\mathbb Q(\alpha):\mathbb Q] = [\mathbb Q(\alpha + \alpha^{-1}):\mathbb Q]$, and thus $\mathbb Q(\alpha) = \mathbb Q(\alpha + \alpha^{-1})$.\\

\noindent
$(\Rightarrow)$  Suppose now that $\mathbb Q(\alpha) = \mathbb Q(\alpha + \alpha^{-1})$.  
Recall that any root of $q(x)$ is the image of $\alpha+\alpha^{-1}$ under some field embedding (fixing $\mathbb Q$),  $\mathbb Q(\alpha + \alpha^{-1})\hookrightarrow \mathbb C$.  Since $\mathbb Q(\alpha) = \mathbb Q(\alpha + \alpha^{-1})$,   each such field embedding sends $\alpha$ to some root of $p(x)$.  This field equality also implies that $\deg p(x) = n$,  and thus we conclude that the roots of $q(x)$ are all contained in the set of values of the form $\beta+\beta^{-1}$ with $\beta$ a root of $p(x)$.      However, under the further supposition that $p(x)$ is reciprocal (which implies that $n$ is even, see Lemma~\ref{l:reciprocalIrredIsEvenDeg}),  there are only $n/2$ {\em distinct} values in the set of the $\beta+\beta^{-1}$.    Hence, the degree of $q(x)$ must in fact be at most $n/2$, and we have reached a contradiction.

\end{proof} 

For the sake of completeness, we include the following well-known result. 
\begin{Lem}\label{l:reciprocalIrredIsEvenDeg}  Suppose that $p(x) \in \mathbb Z[x]$ is reciprocal and of odd degree (greater than one).   Then $p(x)$ is reducible.   
\end{Lem}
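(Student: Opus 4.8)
The plan is to produce an explicit linear factor. Writing $n=\deg p$ and $p(x)=\sum_{i=0}^n c_i x^i$, I would first record the reciprocity hypothesis in the convenient form $x^{\,n}p(1/x)=p(x)$, equivalently $c_i=c_{n-i}$ for all $i$. Then I would evaluate at $x=-1$: grouping the $i$-th and $(n-i)$-th monomials, they contribute $c_i\big((-1)^i+(-1)^{n-i}\big)$, and since $n$ is odd the exponents $i$ and $n-i$ have opposite parity, so each pair cancels; as $n$ is odd there is moreover no unpaired middle index. Hence $p(-1)=0$.

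From $p(-1)=0$ it follows, by dividing by the monic polynomial $x+1$ (the quotient automatically lies in $\mathbb Z[x]$; alternatively apply Gauss's lemma), that $p(x)=(x+1)\,g(x)$ with $g\in\mathbb Z[x]$ and $\deg g=n-1$. Since $n>1$ we have $\deg g\ge 1$, so neither $x+1$ nor $g$ is a unit of $\mathbb Z[x]$, and therefore $p$ is reducible.

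I expect no genuine obstacle; the only delicate point is the hypothesis that the degree exceed one. Indeed the argument shows that \emph{every} reciprocal polynomial of odd degree is divisible by $x+1$, but this yields reducibility only when the complementary factor $g$ has positive degree — for $n=1$ one merely gets $p(x)=c_1(x+1)$, which is irreducible, so the restriction $\deg p>1$ is exactly what the statement needs.
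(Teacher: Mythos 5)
Your proof is correct, and it takes a somewhat different route from the paper's. The paper argues at the level of roots: reciprocity makes the multiset of roots invariant under $\alpha \mapsto \alpha^{-1}$, and since an odd-degree polynomial has an odd number of roots, this involution must fix some root, forcing one of $\pm 1$ to be a root, whence reducibility. You instead argue at the level of coefficients, pairing $c_i$ with $c_{n-i}$ and evaluating at $x=-1$ to conclude $p(-1)=0$ outright (equivalently, $x^n p(1/x)=p(x)$ gives $p(-1)=(-1)^n p(-1)=-p(-1)$), and then you divide by the monic polynomial $x+1$ to get an integer cofactor of positive degree. Your version is slightly sharper and more self-contained: it pins down $-1$ as a root rather than ``$1$ or $-1$'', it sidesteps the implicit point in the paper's counting that $\alpha$ and $\alpha^{-1}$ occur with equal multiplicity, and it makes explicit both the integrality of the quotient and exactly where the hypothesis $\deg p>1$ is used. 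One harmless remark: you use the symmetry $c_i=c_{n-i}$, whereas the paper's displayed definition reads $c_i=c_{n+1-i}$; the latter is evidently an indexing slip, since the paper's own proof (and its remark about characteristic polynomials of symplectic matrices) relies precisely on the root-inversion property that your convention encodes, so your normalization is the intended one.
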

\begin{proof} Given the $p(x)$ is reciprocal, whenever $\alpha$ is a root pf $p(x)$, so is $\alpha^{-1}$.   Thus,  the roots of $p(x)$ are paired together by $x \mapsto 1/x$.  This accounts for an even number of roots, except for fixed points of this map.  Since $p(x)$ has an odd number of roots, we conclude that at least one of the fixed points,  $x=\pm 1$, is a root of $p(x)$. It follows that $p(x)$ is reducible. 
\end{proof}

We draw some immediate conclusions from Proposition~\ref{p:recipOrElse}.  Let us introduce a non-standard definition:  Call $\mathbb Q(\alpha + \alpha^{-1})$ the {\em trace field} of the algebraic number $\alpha$.     Recall that the (algebraic) norm of an algebraic number is the product of all of its conjugates over $\mathbb Q$.  

\begin{Cor}\label{c:normOneQuadratic}  If $\alpha$  is of norm one with quadratic trace field, then $\mathbb Q(\alpha) \neq \mathbb Q(\alpha + \alpha^{-1})$.
\end{Cor}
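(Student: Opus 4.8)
The plan is to deduce this directly from Proposition~\ref{p:recipOrElse}, which tells us that $\mathbb{Q}(\alpha) \neq \mathbb{Q}(\alpha+\alpha^{-1})$ precisely when the minimal polynomial $p(x)$ of $\alpha$ is reciprocal. So the corollary reduces to showing that the hypotheses — $\alpha$ has norm one and $[\mathbb{Q}(\alpha+\alpha^{-1}):\mathbb{Q}] = 2$ — force $p(x)$ to be reciprocal. First I would note that by Proposition~\ref{p:recipOrElse}'s degree dichotomy, $\deg p(x)$ is either $2$ or $4$, since the trace field has degree $2$.

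\textbf{Case analysis on the degree.} If $\deg p(x) = 4$, then by the argument in the $(\Leftarrow)$ direction of Proposition~\ref{p:recipOrElse}, $p(x)$ must equal $\tilde{q}(x) = x^2 q(x+x^{-1})$, which is visibly reciprocal, and we are done. The remaining case is $\deg p(x) = 2$. Here I would use the norm-one hypothesis: if $p(x) = x^2 + bx + c$ with $b,c \in \mathbb{Q}$, then the product of the two roots of $p$ is $c$, and since this product is (up to sign conventions) the norm of $\alpha$, we get $c = \pm 1$. If $c = 1$ then $p(x)$ is reciprocal and we are finished. If $c = -1$, then the two roots of $p$ are $\alpha$ and $-\alpha^{-1}$; but then $\alpha + \alpha^{-1}$ is \emph{not} one of the symmetric functions that lies in $\mathbb{Q}$ automatically — rather $\alpha - \alpha^{-1} \cdot(-1)\cdots$ — so I would check directly that $\alpha+\alpha^{-1}$ generates $\mathbb{Q}(\alpha)$, contradicting that the trace field is a proper (degree-$2$) subfield of the degree-$2$ field $\mathbb{Q}(\alpha)$, i.e.\ forcing $\mathbb{Q}(\alpha+\alpha^{-1}) = \mathbb{Q}(\alpha)$, which is consistent with $p$ being non-reciprocal but then the trace field would have degree $2 = \deg p$, not be a proper subfield. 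Actually the cleanest route: when $\deg p = 2$ and the trace field also has degree $2$, Proposition~\ref{p:recipOrElse} already gives $\mathbb{Q}(\alpha) = \mathbb{Q}(\alpha+\alpha^{-1})$ unless $p$ is reciprocal — so I only need to rule out that $\deg p = 2$ with $p$ \emph{non}-reciprocal forces a contradiction with norm one, which is exactly the $c = \pm 1$ computation above showing $c=-1$ is the only non-reciprocal option and handling it separately.

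\textbf{The anticipated obstacle} is the bookkeeping in the degree-$2$ non-reciprocal case: one must be careful that "norm one" does not by itself forbid $c = -1$ (e.g.\ $x^2 - x - 1$ has norm $-1$, not $+1$; one must pin down the sign convention — the norm of $\alpha$ is the product of conjugates, which is $c$, so norm one means $c = 1$ outright, not $c = \pm 1$). If the convention is that "norm one" literally means the product of conjugates equals $+1$, then $c = 1$ immediately, $p(x) = x^2 + bx + 1$ is reciprocal, and Proposition~\ref{p:recipOrElse} finishes the proof with no case split needed at all. I would write the proof in this streamlined form: by the degree restriction $\deg p \in \{2,4\}$; in either case the norm-one hypothesis (product of conjugates $= 1$) forces the constant term of $p$ to be $1$, whence — combined with $p$ being monic — $p$ is reciprocal in the degree-$2$ case, and reciprocal automatically in the degree-$4$ case as above; then apply Proposition~\ref{p:recipOrElse}. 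The one subtlety worth a sentence is why the degree-$4$ case still has constant term forced correctly, but there $p = \tilde q$ is reciprocal regardless, so no separate norm argument is needed there.
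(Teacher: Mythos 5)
Your proof is correct and takes essentially the same route as the paper: both reduce to the observation that the norm-one hypothesis forces a quadratic minimal polynomial of the form $x^2+nx+1$, hence reciprocal, and then invoke Proposition~\ref{p:recipOrElse}. The paper is just slightly more economical, arguing by contradiction so that assumed field equality makes $\alpha$ quadratic at once, and the degree-four case you handle separately (where $p=\tilde q$ is reciprocal, and where in any event the conclusion already follows from comparing degrees) never arises.
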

\begin{proof}  Field equality would imply that $\alpha$ is quadratic,  and hence with minimal polynomial of the form $p(x) = x^2 + nx + 1$ for some $n \in \mathbb Z$.  But $p(x)$ is reciprocal of even degree and hence field equality cannot hold.
 
\end{proof}

\begin{Cor}\label{c:PisotImpNonRecip&FldSafZero}  If $\alpha$ is a non-quadratic Pisot number, then $\alpha$ is non-reciprocal.  Moreover,   
$\mathbb Q(\alpha)= \mathbb Q(\alpha + \alpha^{-1})$.
\end{Cor}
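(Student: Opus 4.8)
The plan is to prove the first assertion by contraposition: I will show that a \emph{reciprocal} Pisot number is necessarily quadratic. Recall that $\alpha$ being Pisot means $\alpha$ is an algebraic integer with $\alpha>1$ all of whose conjugates over $\mathbb Q$ other than $\alpha$ itself have absolute value \emph{strictly} less than $1$. Suppose the minimal polynomial $p(x)$ of $\alpha$ is reciprocal; then, as is noted in the proof of Lemma~\ref{l:reciprocalIrredIsEvenDeg}, the set of roots of $p(x)$ is stable under $z\mapsto z^{-1}$. So let $\beta\neq\alpha$ be any conjugate of $\alpha$. Then $|\beta|<1$, hence $|\beta^{-1}|>1$, while $\beta^{-1}$ is again a root of $p(x)$; since $\alpha$ is the only root of $p(x)$ of absolute value at least $1$, we must have $\beta^{-1}=\alpha$, i.e. $\beta=\alpha^{-1}$. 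As $\alpha>1$ forces $\alpha^{-1}\neq\alpha$, this shows $\alpha^{-1}$ is the unique conjugate of $\alpha$ distinct from $\alpha$, so $[\mathbb Q(\alpha):\mathbb Q]=2$. Taking the contrapositive, a non-quadratic Pisot number (which I take to have degree at least $3$, the degree-one case being immediate) has non-reciprocal minimal polynomial.

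For the ``moreover'' statement I would then simply invoke Proposition~\ref{p:recipOrElse}. A non-quadratic Pisot number $\alpha$ has degree at least $3$, hence is irrational, and is of course nonzero, so the hypotheses of that proposition are met; since its minimal polynomial is not reciprocal, the proposition yields $\mathbb Q(\alpha)=\mathbb Q(\alpha+\alpha^{-1})$, as claimed.

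I do not anticipate any real obstacle here. The only points requiring a little care are: using the \emph{strict} inequality in the definition of a Pisot number, so that $\alpha$ is genuinely the unique conjugate of modulus at least $1$ and no conjugate lies on the unit circle; recording that $\alpha\neq\alpha^{-1}$ since $\alpha>1$; and verifying the (trivially satisfied) irrationality hypothesis before applying Proposition~\ref{p:recipOrElse}. Everything else is a direct unwinding of the definitions together with the already-established proposition.
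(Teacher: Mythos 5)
Your proof is correct and is essentially the paper's argument: both rest on the observation that a Pisot number is the only root of its minimal polynomial of modulus at least $1$, so reciprocity (which would force the inverse of a small conjugate to be a root of modulus greater than $1$) can only occur in degree two; you phrase this contrapositively while the paper argues directly with a conjugate $\beta\neq\alpha^{-1}$. The ``moreover'' part is obtained exactly as in the paper, by invoking Proposition~\ref{p:recipOrElse}.
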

\begin{proof} 
Since the minimal polynomial $p(x)$ of $\alpha$ has degree greater than two, it has a root $\beta \neq \alpha^{-1}$ with 
$||\beta || < 1$,  therefore $||\beta^{-1}||>1$ but since $p(x)$ has only $\lambda$ as a root that has norm greater than one, we conclude that 
$p(x)$ is not a reciprocal polynomial.  Thus, we can invoke Proposition~\ref{p:recipOrElse} to find that also the second statement holds.
 
\end{proof}

Motivated by this last result,  we now show that every Pisot unit is bi-Perron (which presumably is well-known).  
\begin{Lem}\label{l:PisotImpliesBiPerron}  If $\alpha$ is a non-quadratic Pisot unit, then 
$\alpha$ is bi-Perron.
\end{Lem}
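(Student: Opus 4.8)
The plan is to deduce everything from the multiplicativity of the algebraic norm together with the unit hypothesis. First I would fix notation: let $\alpha=\alpha_1,\alpha_2,\dots,\alpha_d$ be the complete set of $\mathbb Q$-conjugates of $\alpha$, so that $d=[\,\mathbb Q(\alpha):\mathbb Q\,]\ge 3$ by the non-quadratic hypothesis. Since $\alpha$ is Pisot, $\alpha_1=\alpha>1$ and $||\alpha_i||<1$ for every $i\ge 2$. Since $\alpha$ is a unit, both $\alpha$ and $\alpha^{-1}$ are algebraic integers, so both $N(\alpha)=\prod_{i=1}^{d}\alpha_i$ and $N(\alpha^{-1})=N(\alpha)^{-1}$ are rational integers; hence $N(\alpha)=\pm1$. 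In particular no conjugate vanishes, and
\[
\prod_{i=2}^{d}||\alpha_i|| \;=\; ||\alpha||^{-1} \;=\; \alpha^{-1}.
\]

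Next I would bound each non-dominant conjugate from below. Fix $j\in\{2,\dots,d\}$. Dividing the displayed identity by all the factors except the $j$-th gives
\[
||\alpha_j|| \;=\; \frac{\alpha^{-1}}{\displaystyle\prod_{\substack{2\le i\le d\\ i\ne j}}||\alpha_i||}\,.
\]
The denominator is a product over the index set $\{2,\dots,d\}\setminus\{j\}$, which is nonempty precisely because $d\ge 3$; each of its factors is a positive real strictly less than $1$, so the denominator lies in $(0,1)$, whence $||\alpha_j||>\alpha^{-1}$. Combining this with the Pisot bound, every conjugate $\alpha_j$ with $j\ge 2$ satisfies $\alpha^{-1}<||\alpha_j||<1<\alpha$, which is exactly the annulus condition $||\alpha||^{-1}\le||\alpha_j||<||\alpha||$ from the definition of bi-Perron; as $\alpha>1$ as well, this proves the claim.

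The argument is short, so I do not expect a real obstacle, but one point is worth flagging: the non-quadratic hypothesis is used in an essential way, namely to ensure that the auxiliary product $\prod_{i\ne 1,j}||\alpha_i||$ is a \emph{nonempty} product of numbers below $1$ and hence is itself below $1$. (For a quadratic Pisot unit that product is empty, one obtains only $||\alpha_2||=\alpha^{-1}$, and the conjugate lies on the inner boundary of the annulus — still consistent with the closed lower bound of the definition, but the clean strict inequality genuinely requires $d\ge3$.) I would also state explicitly at the outset that the unit hypothesis rules out a zero conjugate, so that the divisions above are legitimate, and recall that for a positive real number $\alpha$ the complex norm $||\alpha||$ coincides with $\alpha$.
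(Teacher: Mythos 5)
Your proof is correct and follows essentially the same route as the paper's: use the unit hypothesis to get $\prod_i ||\alpha_i||=1$, solve for each non-dominant $||\alpha_j||$ as $||\alpha||^{-1}$ divided by a product of moduli strictly less than $1$, and conclude $||\alpha_j||>||\alpha||^{-1}$. Your extra remarks (nonemptiness of the auxiliary product when $d\ge 3$, nonvanishing of conjugates) only make explicit what the paper leaves implicit.
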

\begin{proof} 
Let $\alpha = \alpha_1, \dots,  \alpha_n$ be the roots of the minimal polynomial of $\alpha$.  Then we have that 
$||\alpha_1 \cdots \alpha_n|| =1$,  $||\alpha_1||>1$ and for each $j> 1$, $||\alpha_j ||<1$.   Therefore for each $i>1$ we have 
\[||\alpha_i|| = \dfrac{1}{||\alpha_1||}\, \dfrac{1}{\prod_{j>1, j \neq i}\; ||\alpha_j||}\,\]
and thus $||\alpha_i||> 1/||\alpha_1||$ and the result holds.   
\end{proof}

\bigskip 

\begin{Lem}\label{l:cubicPisotUnits}    The set of cubic bi-Perron units is exactly the set of cubic Pisot units.  
\end{Lem}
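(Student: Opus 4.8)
The plan is to prove the two inclusions separately. One direction is immediate: a cubic number is \emph{a fortiori} non-quadratic, so Lemma~\ref{l:PisotImpliesBiPerron} already shows that every cubic Pisot unit is bi-Perron. The content of the statement is therefore the converse, that every cubic bi-Perron unit is Pisot, and this is what I would focus on.

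So let $\alpha = \alpha_1 > 1$ be a cubic bi-Perron unit and let $\alpha_1, \alpha_2, \alpha_3$ be the full set of conjugates. Since $\alpha$ is a unit of degree three, its norm is $\pm 1$, hence $||\alpha_1||\,||\alpha_2||\,||\alpha_3|| = 1$. The minimal polynomial is a real cubic, so I would split into the two possible shapes of its root set: either $\alpha_2$ and $\alpha_3$ form a complex conjugate pair, or all three roots are real. In the complex conjugate case, $||\alpha_2|| = ||\alpha_3||$, so from $||\alpha_1||\,||\alpha_2||^2 = 1$ one gets $||\alpha_2|| = ||\alpha_3|| = ||\alpha_1||^{-1/2} < 1$, and $\alpha$ is Pisot with nothing more to check.

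The totally real case is where I would do the only real bookkeeping. Here $||\alpha_2||\,||\alpha_3|| = ||\alpha_1||^{-1} < 1$, so at most one of $\alpha_2,\alpha_3$ can have absolute value $\geq 1$. Suppose, for contradiction, that (after relabeling) $||\alpha_2|| \geq 1$. Applying the bi-Perron \emph{lower} bound to the remaining conjugate gives $||\alpha_3|| \geq ||\alpha_1||^{-1}$, hence $||\alpha_2|| = ||\alpha_1||^{-1}\,||\alpha_3||^{-1} \leq ||\alpha_1||^{-1}\,||\alpha_1|| = 1$; combined with $||\alpha_2|| \geq 1$ this forces $||\alpha_2|| = 1$, so $\alpha_2 = \pm 1$. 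But a rational root is impossible, since the minimal polynomial of the irrational number $\alpha$ is irreducible of degree three. This contradiction shows $||\alpha_2||, ||\alpha_3|| < 1$, i.e. $\alpha$ is Pisot, completing the argument.

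I do not anticipate a genuine obstacle: everything is forced once the unit condition is read as $||\alpha_1\alpha_2\alpha_3|| = 1$. The only point requiring a little care is the totally real subcase — making sure to invoke the bi-Perron lower bound on the correct conjugate and to note that irreducibility (degree $>1$) rules out $\pm 1$ as a conjugate — but this is routine.
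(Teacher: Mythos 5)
Your proposal is correct and follows essentially the same route as the paper: the inclusion of cubic Pisot units among cubic bi-Perron units is quoted from Lemma~\ref{l:PisotImpliesBiPerron}, and the converse is the same elementary manipulation of $||\alpha_1\alpha_2\alpha_3||=1$ for a cubic unit. Your split into the complex-pair and totally real cases, with irreducibility ruling out the boundary value $||\alpha_2||=1$, is just a more explicit rendering of the paper's uniform argument (and in fact handles the non-strict lower bound in the definition of bi-Perron slightly more carefully than the paper's phrasing does).
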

\begin{proof}    Suppose that $x^3 + a x^2 + b x + c \in \mathbb Z[x]$ with $c = \pm 1$ has non-zero roots $\alpha_1, \alpha_2, \alpha_3$.   Then $|| \alpha_2 \alpha_3 || = 1/||\alpha_1||$, and hence  $\alpha_2$ satisfies $|| \alpha_2 ||> 1/||\alpha_1 ||$ if and only if $|| \alpha_3 ||<1$ and similarly with the roles of $\alpha_2, \alpha_3$ exchanged.    Thus, if $\alpha_1$ is indeed cubic, then its two conjugates have norm greater than its inverse if and only if they both lie in the unit disk.  Since $-c = \alpha_1 \alpha_2 \alpha_3$,  it also follows that in this setting  $\alpha_1$ is of norm greater than one.    Thus, we find that every cubic bi-Perron number is indeed a Pisot unit.  Of course, the previous result gives the other inclusion.
\end{proof}    
   
\begin{Rmk}   On the other hand,  not every non-reciprocal Perron unit is a Pisot number, as already 
$f(x) =   x^4-4x^3+3x+1$ shows.
\end{Rmk}

\bigskip

We verify a property required for a certain construction of pseudo-Anosov elements.   
\begin{Lem}\label{l:nonRecipNotCyclo}  If $\alpha$ is a non-reciprocal Perron number, let $\tilde q(x)$ be the product of the minimal polynomial of $\alpha$ with the minimal polynomial of $\alpha^{-1}$.   Then $\tilde q(x) = f(x^k)$ with $f(x) \in \mathbb Z[x]$ and $k \in \mathbb N$ implies $k=1$.  
\end{Lem}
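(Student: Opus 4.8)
The plan is to convert the hypothesis $\tilde q(x)=f(x^k)$ into a rotational symmetry of the roots of $\tilde q$ and then play it off against the defining feature of a Perron number: $\alpha$ is the \emph{unique} root of its minimal polynomial of maximal modulus. Write $p$ for the minimal polynomial of $\alpha$ and $p^{*}$ for that of $\alpha^{-1}$; non-reciprocity of $\alpha$ means exactly $p\neq p^{*}$, so $p,p^{*}$ are distinct monic irreducibles, $\tilde q=p\,p^{*}$ is reciprocal of even degree, and its roots are simple. If $\tilde q(x)=f(x^k)$ with $k>1$, then the set $R$ of roots of $\tilde q$ is invariant under multiplication by $\omega:=e^{2\pi i/k}$. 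Since $\omega\alpha\in R$ has the same modulus as $\alpha$ but is $\neq\alpha$, the Perron property forbids it from being a root of $p$, so it is a root of $p^{*}$; equivalently $\omega^{-1}\alpha^{-1}$ is a Galois conjugate of $\alpha$.

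I would first rule out $k\geq 3$. Pick an automorphism $\sigma$ of the splitting field of $p$ with $\sigma(\alpha)=\omega^{-1}\alpha^{-1}$ (possible because both are roots of the irreducible $p$), and note $\omega=(\alpha\,\sigma(\alpha))^{-1}$ already lies in that splitting field. Then $\sigma^{2}(\alpha)=\sigma(\omega)^{-1}\omega\,\alpha$ is a conjugate of $\alpha$ of modulus $|\alpha|$, so $\sigma^{2}(\alpha)=\alpha$ and hence $\sigma(\omega)=\omega$. For $k\geq 3$ one also has $\omega^{2}\neq 1$, so the same reasoning applied to $\omega^{2}\alpha\in R$ shows $\omega^{-2}\alpha^{-1}$ is a root of $p$; applying $\sigma$ to it yields $\omega^{-2}\sigma(\alpha)^{-1}=\omega^{-1}\alpha$, again a root of $p$ of modulus $|\alpha|$, which forces $\omega^{-1}\alpha=\alpha$, i.e.\ $\omega=1$ — a contradiction. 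Hence the only value of $k>1$ that could possibly survive is $k=2$.

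The case $k=2$ is where I expect the real work to be. There $\tilde q$ is even, so $p(x)p^{*}(x)=p(-x)p^{*}(-x)$; comparing unique factorizations gives either $p(x)=\pm p(-x)$ (so $p$ is an even or an odd polynomial) — impossible, since $p$ even makes $-\alpha$ a conjugate of $\alpha$ contrary to Perron, while $p$ odd forces $x\mid p$ — or else $\gamma\mapsto -1/\gamma$ permutes the roots of $p$; that involution is fixed-point-free except at the conjugate pair $\{i,-i\}$, so $\deg p$ must be even. Consequently, when $\deg p$ is odd — which is the case in the paper's applications, e.g.\ for the cubic dilatations of Theorem~\ref{t:newFamily2} — this possibility is excluded and the proof closes. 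I would, however, flag that for even $\deg p$ the case $k=2$ genuinely occurs: already the golden ratio $\alpha$ has $\tilde q(x)=x^{4}-3x^{2}+1=f(x^{2})$. So the delicate point, and the one on which the argument really hinges, is handling $k=2$; for that one needs an input beyond ``non-reciprocal Perron'' — in the intended setting, the oddness of the trace-field degree.
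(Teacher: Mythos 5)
Your analysis is correct, and it does more than propose a proof: it shows that the lemma as printed is false. Your golden-ratio example is a genuine counterexample: $\alpha=(1+\sqrt 5)/2$ is a Perron number (indeed a bi-Perron unit under the paper's closed-annulus definition), its minimal polynomial $x^2-x-1$ is not reciprocal, and $\tilde q(x)=(x^2-x-1)(x^2+x-1)=x^4-3x^2+1=f(x^2)$, so $k=2$ occurs. The paper's own proof is a short modulus count: writing $\tilde q(x)=f(x^k)$ partitions the roots of $\tilde q$ into $k$-element packets of constant modulus, and it then asserts that, $\alpha$ being bi-Perron, no other root of $\tilde q(x)$ has the same complex norm as $\alpha$, whence $k=1$. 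That assertion fails in exactly the configuration you isolate: the (bi-)Perron condition controls the conjugates of $\alpha$, but the second factor $\hat p$ of $\tilde q$ has as its roots the \emph{inverses} of those conjugates, and a conjugate of modulus exactly $||\alpha||^{-1}$ --- permitted both by ``Perron'' in the statement and by the closed inner boundary in the paper's definition of bi-Perron --- produces a root of $\hat p$ of modulus $||\alpha||$. For the golden ratio that root is $-\alpha$, a root of $x^2+x-1$, and the packet $\{\alpha,-\alpha\}$ realizes $k=2$.

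Your positive results are also correct and sharpen the picture: the Galois argument excludes every $k\ge 3$ (which the paper's modulus count does not deliver once its uniqueness claim is withdrawn), and your $k=2$ analysis shows that $\tilde q$ is a polynomial in $x^2$ exactly when $-\alpha^{-1}$ is a Galois conjugate of $\alpha$, which forces $\deg p$ to be even since $\gamma\mapsto -1/\gamma$ then permutes the roots of $p$ without fixed points. So the corrected statement is that $k\in\{1,2\}$, with $k=2$ occurring precisely for this skew-reciprocal class (e.g.\ every quadratic unit of norm $-1$), and the lemma as stated is true whenever $\deg p$ is odd or, more generally, whenever $-\alpha^{-1}$ is not conjugate to $\alpha$. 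One correction of attribution: the lemma is not needed for the cubic families of Theorem~\ref{t:newFamily2}, whose SAF claim rests only on Theorem~\ref{t:characterization}; it is invoked in the proof of Theorem~\ref{t:realization} for an \emph{arbitrary} non-reciprocal bi-Perron unit, and there the gap is real --- for the golden ratio, $\tilde q$ genuinely violates the third bullet of the homological criterion, so that proof needs a separate argument (or an added hypothesis of the kind you suggest) for those $\alpha$ having $-\alpha^{-1}$ among their conjugates.
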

\begin{proof} Suppose $\tilde q(x) = f(x^k)$,  then certainly for any zero $\beta$ of $f(x)$ and every $k^{\text{th}}$-root $\gamma$ of $\beta$, thus satisfying $\gamma^k = \beta$, we have $\tilde q(\gamma) = 0$.  Hence the zeros of $\tilde q(x)$ form the full set of the $k^{\text{th}}$-roots of the various zeros of $f(x)$.    But, for $\beta$ fixed, all of its  $k^{\text{th}}$-roots share the same complex norm.  Therefore, we can partition the set of roots of $\tilde q(x)$ into subsets of cardinality $k$ with all elements of the subset sharing the same complex norm.    However,  since $\alpha$ is bi-Perron,  there is no other root of 
 $\tilde q(x)$ that has the same complex  norm as does $\alpha$.    We conclude that $k=1$ and of course $f(x) = \tilde q(x)$. 
\end{proof}

Similarly, we have the following.
\begin{Lem}\label{l:recipNotCyclo}  If $\alpha$ is a  reciprocal bi-Perron number and $p(x)$ its minimal polynomial, then $p(x) = f(x^k)$ with $f(x) \in \mathbb Z[x]$ and $k \in \mathbb N$ implies $k=1$.  
\end{Lem}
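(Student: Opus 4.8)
The plan is to argue exactly as in the proof of Lemma~\ref{l:nonRecipNotCyclo}, exploiting that a bi-Perron number is, by definition, the unique root of its minimal polynomial attaining the maximal complex norm. First I would suppose that $p(x) = f(x^k)$ with $f(x) \in \mathbb Z[x]$ and $k \in \mathbb N$. Since $\alpha$ is a root of $p(x)$, the element $\beta := \alpha^k$ is a root of $f(x)$, and therefore every $k^{\text{th}}$-root $\gamma$ of $\beta$ (that is, every $\gamma$ with $\gamma^k = \beta$) satisfies $p(\gamma) = f(\gamma^k) = f(\beta) = 0$. In particular, writing $\omega = e^{2\pi i /k}$, each of $\alpha, \alpha\omega, \dots, \alpha\omega^{k-1}$ is a root of $p(x)$.

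Next I would observe that these $k$ numbers all share the complex norm $||\alpha||$, and that, since $p(x)$ is the minimal polynomial of $\alpha$ and hence irreducible over $\mathbb Q$, each of them is a conjugate of $\alpha$. But $\alpha$ is bi-Perron, so every conjugate of $\alpha$ other than $\alpha$ itself has complex norm strictly less than $||\alpha||$; in other words $\alpha$ is the only root of $p(x)$ of norm $||\alpha||$. If $k \ge 2$, then $\alpha\omega \neq \alpha$ would be a second such root, a contradiction. Hence $k = 1$ and $f(x) = p(x)$.

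I do not expect any real obstacle here; the argument is essentially a two-line variant of the preceding lemma. (Note that it never uses reciprocity of $\alpha$ — that hypothesis is retained only because this is the form in which the lemma will be applied; indeed the statement holds for any bi-Perron number.) The single point to be careful about is that the bi-Perron condition supplies a \emph{strict} inequality $||z|| < ||\alpha||$ for the conjugates $z \neq \alpha$, which is exactly what excludes the spurious roots $\alpha\omega^j$ with $0 < j < k$ when $k > 1$.
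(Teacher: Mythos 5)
Your proof is correct and follows essentially the same route as the paper, which likewise reduces the claim to the observation that the bi-Perron condition makes $\alpha$ the unique root of $p(x)$ of complex norm $||\alpha||$, while $p(x)=f(x^k)$ with $k\ge 2$ would force additional roots $\alpha\,e^{2\pi i j/k}$ of that same norm. Your aside that reciprocity is not actually used is also consistent with the paper's argument.
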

\begin{proof}  Here also, the polynomial in question has $\alpha$ as its only root that is of complex norm $||\alpha||$.   Thus, the argument used to prove the previous Lemma applies. 
\end{proof}

\subsection{Proof of Theorem~\ref{t:characterization}:  Characterizing SAF-zero pseudo-Anosov maps}

\bigskip

\begin{proof} Suppose that  $\phi$ is a pseudo-Anosov map with dilatation $\lambda$.  By the results of Calta-Smillie reviewed in Subsection~\ref{ss:especiallyCaltaSmillie}, we can assume that $\phi$ is an affine diffeomorphism on $(X, \omega)$ with matrix part being hyperbolic in $k = \mathbb Q(\lambda + \lambda^{-1})$ and that $\phi$ has vanishing SAF-invariant if and only if its stable direction has slope in $k$.

The fixed points under the M\"obius action on $\mathbb R \cup \{\infty\}$ of $M = \begin{pmatrix} a&b\\c&d\end{pmatrix}$  hyperbolic  in $\text{SL}_2(\mathbb R)$ are $(a-d)\pm \sqrt{(a+d)^2 - 4}/(2 c)$.    Due to the projective nature of this action,  the corresponding eigenvectors of $M$ have slopes that are the inverses of these fixed points.  Thus, these eigenvectors are of slope in $k$ exactly when $(a+d)^2 - 4$ is a square in $k$.   When $M$ is the matrix part of the affine diffeomorphism $\phi$ on $(X, \omega)$,  the eigenvectors of $M$ give the direction of the stable and unstable foliations for $\phi$ on $(X, \omega)$, and the trace of $M$ equals $\lambda + \lambda^{-1}$.    Thus, these foliations have directions in the trace field exactly when $(\lambda + \lambda^{-1})^2 - 4$ is a square in $k$.   That is,  $\phi$ has vanishing SAF-invariant if and only if $(\lambda + \lambda^{-1})^2 - 4$ is a square in $k$.

On the other hand,  it is obvious that $\mathbb Q(\lambda) \supset k$ and that $\lambda$ is a zero of $(x-\lambda)(x-\lambda^{-1}) = x^2 - (\lambda + \lambda^{-1}) x + 1 \in k[x]$.   Hence $\mathbb Q(\lambda) =  k$ if and only if the discriminant of $x^2 - (\lambda + \lambda^{-1}) x + 1$ is a square in $k$, and otherwise there is a proper containment with field extension degree $[\mathbb Q(\lambda):k] = 2$.    However, the discriminant of $x^2 - (\lambda + \lambda^{-1}) x + 1$ is $(\lambda + \lambda^{-1})^2 - 4$.  Thus,  we find that $\phi$ has vanishing SAF-invariant if and only if $\mathbb Q(\lambda) =   \mathbb Q(\lambda + \lambda^{-1})$. 

Our result now follows from Proposition~\ref{p:recipOrElse}.
\end{proof}

\bigskip 

\begin{Rmk}   Since the stable and unstable foliations of the pseudo-Anosov map correspond to the fixed points of the linear part, it follows from the above proof that either both are of vanishing SAF-invariant, or else neither is. 
\end{Rmk}

\subsection{Some implications}

Recall that if $\lambda$ is the dilatation of a (orientable) pseudo-Ansov map $\phi$, then we call $\mathbb Q(\lambda + \lambda^{-1})$ the {\em trace field} of $\phi$.
\begin{Cor}\label{c:quadratic}  If an orientable pseudo-Anosov map $\phi$ has quadratic trace field, then $\phi$ has non-vanishing \textnormal{SAF}-invariant. 
\end{Cor}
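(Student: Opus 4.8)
The plan is to derive Corollary~\ref{c:quadratic} directly from Theorem~\ref{t:characterization} together with Corollary~\ref{c:normOneQuadratic}. The key observation is that the dilatation $\lambda$ of an orientable pseudo-Anosov map is a unit: it is an algebraic integer (being an eigenvalue of the integral matrix $\phi_*$ on $H_1(X,\mathbb Z)$), and its inverse $\lambda^{-1}$ is the dilatation of $\phi^{-1}$, hence likewise an algebraic integer. Moreover, since $\phi_*$ is a symplectic integral matrix, its characteristic polynomial is monic reciprocal, so $\lambda^{-1}$ is also an eigenvalue and $\lambda$ and $\lambda^{-1}$ are Galois conjugates; in particular the set of conjugates of $\lambda$ is stable under $x \mapsto x^{-1}$, which one can use to see that the norm of $\lambda$ is $\pm 1$.

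First I would record that the trace field of $\phi$ is by definition $\mathbb Q(\lambda + \lambda^{-1})$, so the hypothesis is that $[\mathbb Q(\lambda + \lambda^{-1}):\mathbb Q] = 2$. Next I would invoke the preceding paragraph to conclude that $\lambda$ is an algebraic number of norm one whose trace field is quadratic. Then Corollary~\ref{c:normOneQuadratic} applies verbatim and gives $\mathbb Q(\lambda) \neq \mathbb Q(\lambda + \lambda^{-1})$. Finally, by the equivalence established in the proof of Theorem~\ref{t:characterization} (namely that $\phi$ has vanishing SAF-invariant if and only if $\mathbb Q(\lambda) = \mathbb Q(\lambda + \lambda^{-1})$), or equivalently by Theorem~\ref{t:characterization} combined with Proposition~\ref{p:recipOrElse}, it follows that $\phi$ has non-vanishing SAF-invariant.

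An alternative and essentially equivalent route avoids even mentioning the norm: if the trace field is quadratic, then $\lambda$ satisfies the quadratic $x^2 - (\lambda+\lambda^{-1})x + 1$ over that field; if furthermore $\mathbb Q(\lambda)$ equalled the trace field, $\lambda$ would be a quadratic algebraic integer with minimal polynomial of the form $x^2 + nx \pm 1$. The case $x^2 + nx - 1$ is excluded because then $\lambda^{-1}$ would be a conjugate root of opposite sign, forcing $\lambda + \lambda^{-1}$ to be rational, contradicting that the trace field is quadratic; and $x^2 + nx + 1$ is reciprocal, so by Theorem~\ref{t:characterization} the SAF-invariant does not vanish — but this reciprocal polynomial is precisely the minimal polynomial of $\lambda$ only in the degenerate case, and more care is needed. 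For this reason I would favor the first route, which is cleaner.

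The main obstacle — really the only point needing attention — is justifying that $\lambda$ has norm one, i.e. is a unit. This is standard (Fried's theorem, cited earlier in the excerpt, already asserts the dilatation of any pseudo-Anosov is a bi-Perron \emph{unit}), but it does rely on orientability so that $\lambda$ is genuinely an eigenvalue of the integral symplectic matrix $\phi_*$; one should remark that this is exactly the hypothesis in force. Granting that, the corollary is a one-line consequence of Corollary~\ref{c:normOneQuadratic} and Theorem~\ref{t:characterization}.
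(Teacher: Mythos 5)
Your first route follows the same skeleton as the paper's proof --- reduce to Corollary~\ref{c:normOneQuadratic} via Theorem~\ref{t:characterization} and Proposition~\ref{p:recipOrElse} --- but it has a genuine gap: you only ever treat the case of norm $+1$. Being a unit gives $N(\lambda)=\pm 1$, not $N(\lambda)=1$ (you explicitly conflate the two when you write ``justifying that $\lambda$ has norm one, i.e.\ is a unit''; Fried's theorem also only gives a unit, hence norm $\pm1$), and Corollary~\ref{c:normOneQuadratic} is stated only for norm one. The dangerous case is precisely a quadratic unit of negative norm, e.g.\ $\lambda=1+\sqrt{2}$ with minimal polynomial $x^2-2x-1$: this is a bi-Perron unit, $\lambda+\lambda^{-1}=2\sqrt{2}$ generates a quadratic field, and $\mathbb Q(\lambda)=\mathbb Q(\lambda+\lambda^{-1})$, so neither Corollary~\ref{c:normOneQuadratic} nor anything else in your first route says anything about it. Your justification for ignoring this case is circular: orientability only tells you that the characteristic polynomial of $\phi_*$ is reciprocal, i.e.\ the full \emph{eigenvalue} set is stable under $x\mapsto x^{-1}$; the set of Galois conjugates of $\lambda$ (the roots of one irreducible factor) is inversion-stable exactly when the minimal polynomial of $\lambda$ is reciprocal, which by Theorem~\ref{t:characterization} is equivalent to the very conclusion you are trying to prove.

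Your alternative route does confront $x^2+nx-1$, but the exclusion offered is incorrect: if the minimal polynomial of $\lambda$ is $x^2+nx-1$, the Galois conjugate of $\lambda$ is $-\lambda^{-1}$, so it is $\lambda-\lambda^{-1}=-n$ that is rational, while $\lambda+\lambda^{-1}=\sqrt{n^2+4}$ is irrational --- perfectly consistent with a quadratic trace field, so no contradiction appears, and you then abandon this route yourself. The paper's proof is organized exactly around the dichotomy you skip: the dilatation is a unit, hence of norm $\pm1$; the norm $+1$ case is disposed of by Corollary~\ref{c:normOneQuadratic} (this is your first route), and the norm $-1$ case is addressed by a separate observation about the minimal polynomial (a monic irreducible reciprocal polynomial has constant term $1$, so a unit of norm $-1$ cannot have reciprocal minimal polynomial). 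So what you have is the first half of the paper's argument; the negative-norm case must be dealt with on its own terms rather than by assuming the norm is $1$.
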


\begin{proof}   The dilatation of $\phi$ is a unit, and hence has norm $\pm 1$.  In the first case, Corollary~\ref{c:normOneQuadratic}  applies.  In the second,  the minimal polynomial (being monic) cannot be reciprocal. 
\end{proof}

\bigskip 

\begin{Rmk}  Recall that Kenyon-Smillie \cite{KS} showed that if $(X, \omega)$ supports an affine pseudo-Anosov map, then the trace field of the map is the trace field of $(X, \omega)$.   We can thus compare Corollary~\ref{c:quadratic} with  McMullen's  Theorem~A.1 of the appendix in \cite{McFlux}.  In our language, McMullen shows that under the hypothesis that the Veech group of $(X,\omega)$ is a lattice (which certainly implies the existence of affine pseudo-Anosov maps), the trace field of $(X, \omega)$ being quadratic implies that the only directions of flow with vanishing SAF-invariant are those for which the flow is periodic. (By Veech's dichotomy \cite{V2}, these are the  directions in which $(X, \omega)$ decomposes into cylinders).  McMullen's result thus certainly excludes the stable foliation of pseudo-Anosov maps from having vanishing SAF-invariant.     That is,   the hypothesis of lattice Veech group allows one also  to rule out (from having vanishing SAF-invariant) those non-periodic directions which are not the stable foliations of pseudo-Anosov maps. 
\end{Rmk}

\bigskip 

\begin{Rmk}   We point out that  if a  pseudo-Anosov map $\phi$ is of vanishing SAF-invariant and its dilatation is not totally real, then its trace field is also not totally real.   This holds, as vanishing SAF-invariant implies equality of the trace field with the field generated over $\mathbb Q$ by the dilatation. 
This can be applied to allow a minor simplification in the existence arguments of \cite{HL}. 

\end{Rmk}

 \bigskip 
         
\subsection{Every bi-Perron unit has its minimal polynomial dividing the characteristic polynomial of some pseudo-Anosov's homological action}\label{ss:biPerronHomol}
 \bigskip

\begin{Thm}\label{t:realization}   Suppose that $\alpha$ is a bi-Perron unit.   Then the minimal polynomial of $\alpha$ divides the characteristic polynomial of the action on first integral homology  induced by some pseudo-Anosov map. 
\end{Thm}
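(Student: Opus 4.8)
The plan is to deduce the theorem from the Margalit--Spallone construction: it suffices to produce a monic reciprocal polynomial $q(x) \in \mathbb{Z}[x]$ of even degree at least $4$ that is divisible by the minimal polynomial $p(x)$ of $\alpha$ and that passes the homological criterion, since then Margalit--Spallone yield a pseudo-Anosov map whose action on first integral homology has characteristic polynomial $q(x)$, and $p(x)\mid q(x)$ gives the conclusion. Throughout I would use that, $\alpha$ being a real number greater than one, it is not a root of unity (so $p$ is not cyclotomic), and that a bi-Perron unit has degree at least $2$ (degree one would force $\alpha=\pm1$).

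First I would handle the case that $p(x)$ is reciprocal, so that $\alpha$ is a reciprocal bi-Perron number. By Lemma~\ref{l:reciprocalIrredIsEvenDeg} the irreducible polynomial $p(x)$ has even degree, and I would take $q:=p$. Being irreducible, $p$ is in particular symplectically irreducible; it is not cyclotomic; and by Lemma~\ref{l:recipNotCyclo} it is not a polynomial in $x^{k}$ for any integral $k>1$. Hence, as long as $\deg p \ge 4$, the polynomial $q=p$ passes the homological criterion.

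Next I would handle the case that $p(x)$ is non-reciprocal. Let $p^{\ast}(x)$ be the minimal polynomial of $\alpha^{-1}$, i.e.\ the monic polynomial proportional to $x^{\deg p}p(1/x)$; non-reciprocity of $p$ gives $p^{\ast}\neq p$, and both $p$ and $p^{\ast}$ are irreducible. I would set $q:=p\cdot p^{\ast}$, which is monic, reciprocal, and of even degree $2\deg p\ge 4$. Since $\mathbb{Z}[x]$ has essentially unique factorization, the only monic divisors of $q$ are $1,p,p^{\ast},q$, and the only one of these that is reciprocal is $q$ itself; hence $q$ is not a product of reciprocal polynomials of strictly lesser degree, i.e.\ it is symplectically irreducible. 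It is not cyclotomic, and by Lemma~\ref{l:nonRecipNotCyclo} it is not a polynomial in $x^{k}$ for $k>1$. So $q$ passes the homological criterion, and $p\mid q$.

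Finally I would treat the cases left uncovered above: $\deg p=2$, where $q$ has degree $2$ and so is too small to be the homological polynomial of a map of a surface of genus at least two, and the degenerate non-reciprocal situations where $p\cdot p^{\ast}$ happens to be a polynomial in $x^{2}$ (for instance every quadratic unit of norm $-1$, such as the golden ratio). Here I would argue directly, realizing $\alpha$ as an eigenvalue of the homological action of a pseudo-Anosov map obtained by lifting a linear hyperbolic automorphism $A$ of the torus --- one having $\alpha$ among its eigenvalues, after replacing $A$ by $A^{2}$ if $\det A=-1$ --- to a cyclic cover of the torus branched over periodic points of $A$, chosen so that $A$ itself (not merely a power) lifts, so that the ramification raises the genus to at least $2$, and so that the pulled-back stable and unstable foliations remain orientable (the branch points give singularities with an even number of prongs); alternatively one may invoke the classical Thurston--Veech constructions of orientable pseudo-Anosov maps of genus $2$ with prescribed quadratic dilatation. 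I expect this last step to be the main obstacle: once the homological criterion is checked the Margalit--Spallone machinery is a black box, so the genuine content lies in the verifications that $q$ is not a polynomial in $x^{k}$ --- exactly where the bi-Perron hypothesis is used, via Lemmas~\ref{l:nonRecipNotCyclo} and~\ref{l:recipNotCyclo} --- and in the covering construction for the excluded units, where one must check both that the toral automorphism lifts and that orientability is preserved.
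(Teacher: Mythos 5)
Your main argument is the same as the paper's proof of Theorem~\ref{t:realization}: split according to whether the minimal polynomial $p$ of $\alpha$ is reciprocal, take $q=p$ or $q=p\,\hat p$ (the paper writes the latter as $x^{g}q(x+x^{-1})$ with $q$ the minimal polynomial of $\alpha+\alpha^{-1}$, which is the same polynomial), check the homological criterion via Lemmas~\ref{l:recipNotCyclo} and~\ref{l:nonRecipNotCyclo}, and invoke the Margalit--Spallone construction; your explicit verification that $p\,\hat p$ is symplectically irreducible is left implicit in the paper but is correct.

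The divergence is your final paragraph of exceptional cases, where your diagnosis is sound but your repair is not. You are right that quadratic $\alpha$ escapes the main argument: for reciprocal quadratic $p$ the Margalit--Spallone machine needs genus at least two, and for quadratic units of norm $-1$ the product $p\,\hat p$ genuinely is a polynomial in $x^{2}$ (for the golden ratio it is $x^{4}-3x^{2}+1$), so the third condition of the homological criterion fails. In fact this shows that Lemma~\ref{l:nonRecipNotCyclo} as stated is too strong: its proof claims that no root of $p\,\hat p$ other than $\alpha$ has modulus $||\alpha||$, but the bi-Perron annulus is closed on its inner boundary, so a conjugate of modulus exactly $||\alpha||^{-1}$ yields a root of $\hat p$ of modulus $||\alpha||$; the paper's proof passes over these cases in silence, so you have put your finger on a real (if repairable) oversight rather than a phantom one. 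But your patch does not close it: replacing $A$ by $A^{2}$ when $\det A=-1$ puts $\alpha^{\pm 2}$, not $\alpha^{\pm 1}$, into the invariant block of the homology of the branched cover, so the minimal polynomial of $\alpha$ need not divide the characteristic polynomial of the lifted map's homological action, and nothing in your sketch forces $\alpha$ to appear in the anti-invariant block either; this subcase therefore remains open in your write-up (one would need, for instance, an orientable pseudo-Anosov map whose dilatation is $\alpha$ itself, which requires a separate construction). The reciprocal quadratic case, handled by lifting an Anosov map to a double cover of the torus branched over two fixed points so that the invariant block contributes $x^{2}-tx+1$, is fine in outline, modulo the lifting check you acknowledge.
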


 \begin{proof} If $\alpha$ is a  bi-Perron unit  whose minimal polynomial $p(x)$ is reciprocal (and hence of even) degree say $2 g$, then $p(x)$ is obviously (symplectically) irreducible.    That $p(x)$  is not cyclotomic   is clear.  That $p(x) = f(x^k)$ is only trivially possible is shown in Lemma~\ref{l:recipNotCyclo}.   Thus, the hypotheses are all satisfied for the Margalit-Spallone construction of \cite{MS} to give an explicit pseudo-Anosov element, (indeed a full coset of the Torelli group) in the mapping class group of the genus $g$  surface,  whose induced action on homology has characteristic polynomial $p(x)$.

If $\alpha$ is a bi-Perron unit of   degree $g$  whose minimal polynomial $p(x)$ is not reciprocal, let  $\hat p(x)$ be the minimal polynomial of $\alpha^{-1}$.  And once again let $q(x)$ be the  minimal polynomial of $\alpha + \alpha^{-1}$, which by Theorem~\ref{t:characterization} is also of degree $g$.  Let  $\tilde q (x) = x^g q(x+x^{-1})$.  Since both  $\alpha, \alpha^{-1}$ are roots of $\tilde q (x)$, degree considerations give that   $\tilde q(x) = p(x) \hat p(x)$.

Lemma~\ref{l:nonRecipNotCyclo} shows that $\tilde q(x)$ is not equal to any non-trivial $f(x^k)$.  
That $\tilde q(x)$ has no cyclotomic roots is clear, as its only roots are those of  $p(x), \hat p(x)$ and each of these is an irreducible polynomial with a root that is of absolute value greater than one.     Again, the hypotheses are all satisfied for the Margalit-Spallone construction, so that there exist pseudo-Anosov homeomorphisms whose induced action on homology is of characteristic polynomial $\tilde q (x)$.  

\end{proof}

\begin{Rmk}    If  any of the pseudo-Anosov maps guaranteed by \cite{MS} is orientable,  then its  dilatation is an eigenvalue of the action on homology.  But, the dilatation  must then be $\alpha$, and we have realized $\alpha$ as a dilatation.  

As recalled in \cite{LT}, the dilatation of  a non-orientable pseudo-Anosov homeomorphism cannot be an eigenvalue for the induced action on homology.  Thus, if  none of the pseudo-Anosov maps guaranteed by \cite{MS} is orientable,    even after applying the  standard double cover construction (see say the text \cite{FM})  we can say little more than stated in Theorem~\ref{t:realization}.   
\end{Rmk}

\bigskip 
\bigskip 
\subsection{A problem of Birman {\em et al.}}\label{ss:BirmanQ}
Birman, Brinkmann and Kawamuro   \cite{BBK} associate to a   pseudo-Anosov map $\phi$ of dilatation  $\lambda$  a symplectic polynomial $s(x)$ that has $\lambda$ as its largest real root.  They write ``its relationship to the minimum polynomial of $\lambda$ is not completely clear at this writing."   We give an explanation in the setting that $\phi$ is orientable  (and defined on a surface without punctures).

\begin{Thm}\label{t:birmanEtAl}    Suppose that $\phi$ is an orientable pseudo-Anosov map on a surface of genus $g$.
Let $s(x)$ be the polynomial associated to $\phi$ in \cite{BBK}.  Then $s(x)$ is reducible if and only if either $\phi$ has vanishing \textnormal{SAF}-invariant or has trace field of degree strictly less than $g$. 
\end{Thm}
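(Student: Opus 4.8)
The plan is to identify the symplectic polynomial $s(x)$ of \cite{BBK} with the characteristic polynomial $q_\phi(x)=\det(xI-\phi_*)$ of the induced action $\phi_*\colon H_1(X;\mathbb Z)\to H_1(X;\mathbb Z)$. Since $X$ is closed of genus $g$ and without punctures, $H_1(X;\mathbb Z)\cong\mathbb Z^{2g}$ carries its intersection symplectic form, $\phi_*$ is symplectic, and hence $s(x)$ is a monic reciprocal polynomial of degree $2g$ with integer coefficients (this is exactly the symplectic polynomial of \cite{BBK} in the puncture-free case). Because $\phi$ is orientable, its dilatation $\lambda$ is an eigenvalue of $\phi_*$ (as recalled above, using \cite{LT}), hence a root of $s(x)$; writing $p(x)$ for the minimal polynomial of $\lambda$ over $\mathbb Q$, we therefore have $p(x)\mid s(x)$, both monic. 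The argument then reduces to a comparison of degrees, using Theorem~\ref{t:characterization} for the dictionary between reciprocity of $p(x)$ and the (non-)vanishing of the SAF-invariant, and Proposition~\ref{p:recipOrElse} together with Lemma~\ref{l:reciprocalIrredIsEvenDeg} for the degree bookkeeping. Throughout, write $d$ for the degree of the trace field $\mathbb Q(\lambda+\lambda^{-1})$ of $\phi$.

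Suppose first that $p(x)$ is not reciprocal; by Theorem~\ref{t:characterization} this is exactly the case where $\phi$ has vanishing SAF-invariant. Then $p(x)\ne s(x)$, since $s(x)$ is reciprocal while $p(x)$ is not, so (as $p(x)\mid s(x)$ and both are monic) $\deg s(x)>\deg p(x)$, and $s(x)$ is reducible. Suppose instead that $p(x)$ is reciprocal, equivalently (again by Theorem~\ref{t:characterization}) that $\phi$ has non-vanishing SAF-invariant. By Proposition~\ref{p:recipOrElse} and the degree count in its proof, reciprocity of $p(x)$ forces $\mathbb Q(\lambda)\ne\mathbb Q(\lambda+\lambda^{-1})$ and $\deg p(x)=2d$; since $p(x)\mid s(x)$ and $\deg s(x)=2g$, this gives $d\le g$. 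As $p(x)$ is the monic irreducible factor of $s(x)$ vanishing at $\lambda$, $s(x)$ is irreducible if and only if $s(x)=p(x)$, i.e.\ if and only if $2d=2g$, i.e.\ if and only if $d=g$. Hence in this case $s(x)$ is reducible precisely when $d<g$.

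Combining the two cases gives the theorem. If $\phi$ has vanishing SAF-invariant we are in the first case, so $s(x)$ is reducible; and if $\phi$ has trace field of degree $d<g$, then either we are in the first case (so $s(x)$ is reducible) or in the second (where $d<g$ forces $s(x)$ reducible), which proves the ``if'' direction. Conversely, if $\phi$ has non-vanishing SAF-invariant and trace field of degree $d\ge g$, then we are in the second case with $d=g$, so $s(x)=p(x)$ is irreducible; this is the contrapositive of the ``only if'' direction. The one step warranting care — and where I expect the real work to lie — is confirming that, in the orientable puncture-free setting, the polynomial $s(x)$ of \cite{BBK} is exactly $\det(xI-\phi_*)$, a monic reciprocal integer polynomial of degree $2g$ having $\lambda$ among its roots. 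Once this is in place, everything rests on divisibility and degrees, and any superfluous (e.g.\ cyclotomic) factors of $s(x)$ are harmless, since reducibility of $s(x)$ amounts to $s(x)\ne p(x)$ no matter what those factors look like.
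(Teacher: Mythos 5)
Your argument is correct, but it is organized differently from the paper's. You case on whether the minimal polynomial $p(x)$ of $\lambda$ is reciprocal (equivalently, via Theorem~\ref{t:characterization}, on whether the SAF-invariant vanishes) and then settle everything by pure divisibility and degree bookkeeping: $p(x)\mid s(x)$, $\deg s(x)=2g$, and $\deg p(x)=2d$ or $d$ according as $p(x)$ is or is not reciprocal (Proposition~\ref{p:recipOrElse}); in particular you never need the notion of symplectic irreducibility nor the minimal polynomial $\hat p(x)$ of $\lambda^{-1}$. The paper instead runs a trichotomy on the factorization type of $s(x)$ --- irreducible, reducible but symplectically irreducible, symplectically reducible --- using that the complementary factor of a reciprocal factor of $s(x)$ is again reciprocal; this yields the extra structural fact that when $s(x)$ is reducible but symplectically irreducible it equals exactly $p(x)\hat p(x)$, at the cost of a slightly longer argument. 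Your explicit reliance on the identification of $s(x)$ with the characteristic polynomial of $\phi_*$ on $H_1$ (monic, integral, reciprocal, degree $2g$, with $\lambda$ as a root) is not a gap relative to the paper: the paper uses precisely these properties of the polynomial of \cite{BBK}, including $\deg s(x)=2g$ implicitly in its final case and in the converse direction, so both proofs rest on the same imported facts; you are simply more candid about where they enter. One small point worth stating if you write this up: in your reciprocal case the inference ``$s(x)$ irreducible iff $s(x)=p(x)$ iff $d=g$'' is what also supplies the unstated half of the paper's first case (that irreducibility forces trace field degree exactly $g$), so your version actually makes the biconditional bookkeeping more transparent.
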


\bigskip 

\begin{proof}  Let $\lambda$ be the dilatation of $\phi$ and $p(x)$ be the minimal polynomial of $\lambda$.   Since $s(x)\in \mathbb Z[x]$ is monic and has $\lambda$ as a root, of course $p(x)$ divides $s(x)$.    As well, since $s(x)$ is a reciprocal polynomial,  whenever some $\alpha$ is a root of $s(x)$ so also is $\alpha^{-1}$ a root.

If $s(x)$ is irreducible then it equals $p(x)$.  Thus,  $p(x)$ is in particular reciprocal.   Therefore, by Theorem~\ref{t:characterization} the SAF-invariant of $\phi$  does not vanish.  

 Suppose now that $s(x)$ is reducible but symplectically irreducible.  Were $p(x)$ reciprocal, then there would exist some other factor of $s(x)$, but this factor would perforce be reciprocal.  This contraction shows that in this case $p(x)$ is not a reciprocal polynomial.  In particular,  the minimal polynomial $\hat p(x)$ of  $\lambda^{-1}$ is distinct from $p(x)$.   But since $\lambda$ is a root of $s(x)$, so is $\lambda^{-1}$ and hence $\hat p(x)$ also divides $s(x)$.   That is $\tilde q(x) = p(x) \hat p(x)$ divides $s(x)$.   The existence of any further factor of $s(x)$ would lead to a contradiction of the symplectic irreducibility of $s(x)$.   That is,  whenever  $s(x)$ is reducible but symplectically irreducible it is exactly the product $\tilde q(x) = p(x) \hat p(x)$ and $p(x)$ is not reciprocal.   By Theorem~\ref{t:characterization} the SAF-invariant of $\phi$ vanishes.  

Finally, suppose that $s(x)$ is  symplectically reducible.  We have that either $p(x)$ is reciprocal or that $\tilde q(x) = p(x) \hat p(x)$ divides $s(x)$.   In either case, there is some other reciprocal factor of $s(x)$.  Thus the degree of $p(x)$ or $\tilde q(x)$ is correspondingly of degree less than $2g$ and as the trace field $\mathbb Q(\lambda + \lambda^{-1})$ has dimension over $\mathbb Q$ equal to one-half of the  degree of  $p(x)$ or $\tilde q(x)$ in these respective cases, we indeed find that the trace field of $\phi$ has degree strictly less than $g$.
\end{proof}

\begin{Rmk}
In particular,  Example~5.2 of \cite{BBK} shows that the monodromy of the hyperbolic knot $8_9$ leads to an orientable  pseudo-Anosov map with  $s(x) = (x^3-2x^2 + x- 1)(x^3 - x^2 + 2x - 1)$.     Here the dilatation $\lambda$ is the real root of $x^3 - x^2 + 2x - 1$,  the second factor is the minimal polynomial of $1/\lambda$.  Using its minimal polynomial, one easily shows that $\lambda$ equals $-(\lambda + \lambda^{-1})^2 + 3 (\lambda + \lambda^{-1}) -1$,  
implying that indeed $\mathbb Q(\lambda)= \mathbb Q(\lambda + \lambda^{-1})$. 
\end{Rmk}

\section{Spinning about small loops}   
 
\subsection{Rediscovering the Arnoux-Rauzy family of  $\mathscr{H}^{\text{odd}}(2, 2)$}  Mimicking the construction of \cite{AY},  Arnoux and Rauzy \cite{AR} constructed an infinite family of IETs, the first two  of which  Lowenstein,  Poggiaspalla, and  Vivaldi \cite{LPV, LPV2}  studied in detail, as these lead to SAF-zero pseudo-Anosov maps.   Indeed, by making an appropriate adjustment,  Lowenstein {\em et al.} renormalized these first two IETs in such a way that each was periodic under Rauzy induction.   Each corresponds to a cycle passing through the same 29 vertices in the 294-vertex Rauzy class of 7-interval IETs, and under the Veech construction leads to a pseudo-Anosov homeomorphism.  The dilatations of these are the largest root of $x^3-7x^2+5x-1=0$ and $x^3-10x^2+6x-1=0$ respectively.      

Presumably,  Lowenstein {\em et al.}  intend that one follow their recipe for constructing pseudo-Anosov homeomorphisms for the remainder of the Arnoux-Rauzy family.   This seemed somewhat daunting to us.   However,  we found that one can succeed by adjusting the cycle  given by the first Arnoux-Rauzy IET  by spinning about certain small cycles.   Since the Arnoux-Yoccoz pseudo-Anosov homeomorphism in genus 3 corresponds to an abelian differential in  $\mathscr H^{\text{odd}}(2,2)$ (for this and much more see \cite{HLM}), all of these examples (since they arise from the same Rauzy class) are in this same connected component. 

More precisely, for each $k \geq1$, the path $\rho_k=00001010(111111)^{k-1}1101(00)^{k-1}010100111$, starting from the permutation $(7354621)$, gives these maps. (Here and throughout,  exponents  as in the expression for $\rho_k$ indicate repeated concatenation of the correspondingly grouped symbols.)  One then finds that the characteristic polynomial of the induced transition matrix for $\gamma_k$ is $p_k(x)=(x^3-(3k+4)x^2+(k+4)x-1)(x^3-(k+4)x^2+(3k+4)x-1)(x-1)$.  To verify this, break up $\rho_k$ into five paths corresponding to $00001010, (111111)^k, 1101, (00)^k, 010100111$,  and compute their transition matrices. From this,  one easily shows that the associated matrix for $\gamma_k$ is the matrix

 \[V_k=\begin{pmatrix}
 2 & 2 & 2 & 2 & 4 & 3k+2 & 3k-1\\
 0 & 2 & 2 & 1 & 0 & 0 & 0\\
 0 & k-1 & k & 0 & 0 & 0 & 0\\
 1 & 2 & 2 & 2 & 0 & 0 & 0\\
 0 & 0 & 0 & 0 & 2 & k+1 & k\\
 0 & 0 & 0 & 0 & 1 & 2k & 2k-2\\
 1 & 1 & 1 & 1 & 1 & 1 & 1\\ 
 \end{pmatrix}\]
whose characteristic polynomial is $p_k(x)$.

\begin{Rmk}   Erwan Lanneau has informed us that (in unpublished work) he also found this family,  in virtually the same manner.    
\end{Rmk}

\subsection{ Two known examples in  $\mathscr{H}^{\text{hyp}}(4)$}  Veech \cite{V2} constructed an infinite family of translation surfaces with Veech groups that are lattices in $\text{SL}_2(\mathbb R)$.   For each $n \ge 5$,  his construction is to identify, by translation, parallel sides of a  regular $n$-gon and its mirror image.  In the case of $n=7$,  one finds a genus 3 surface with exactly one singularity of cone angle $8\pi$.   Veech shows that the Veech group here is generated by $S=\begin{pmatrix} \cos(\pi/7) &  -\sin(\pi/7) \\ \sin(\pi/7)& \cos(\pi/7)\end{pmatrix}$ and $T=\begin{pmatrix} 1 &  2\cot(\pi/7) \\ 0& 1\end{pmatrix}$.    In \cite{AS}, it is pointed out that results on (Rosen) continued fractions of Rosen and Towse \cite{RT} imply that on this surface there is a SAF-zero pseudo-Anosov; indeed this is the map, say $\psi$, of linear part $D\psi = STS^{-1}T$.    Explicitly taking a transversal to the flow in the expanding direction for $\psi$, and following Rauzy induction on the corresponding IET,   we found that $\psi$ results from the loop displayed in Figure~\ref{f:VeechHep}.

 \begin{figure}[h]
\scalebox{.6}{
\includegraphics{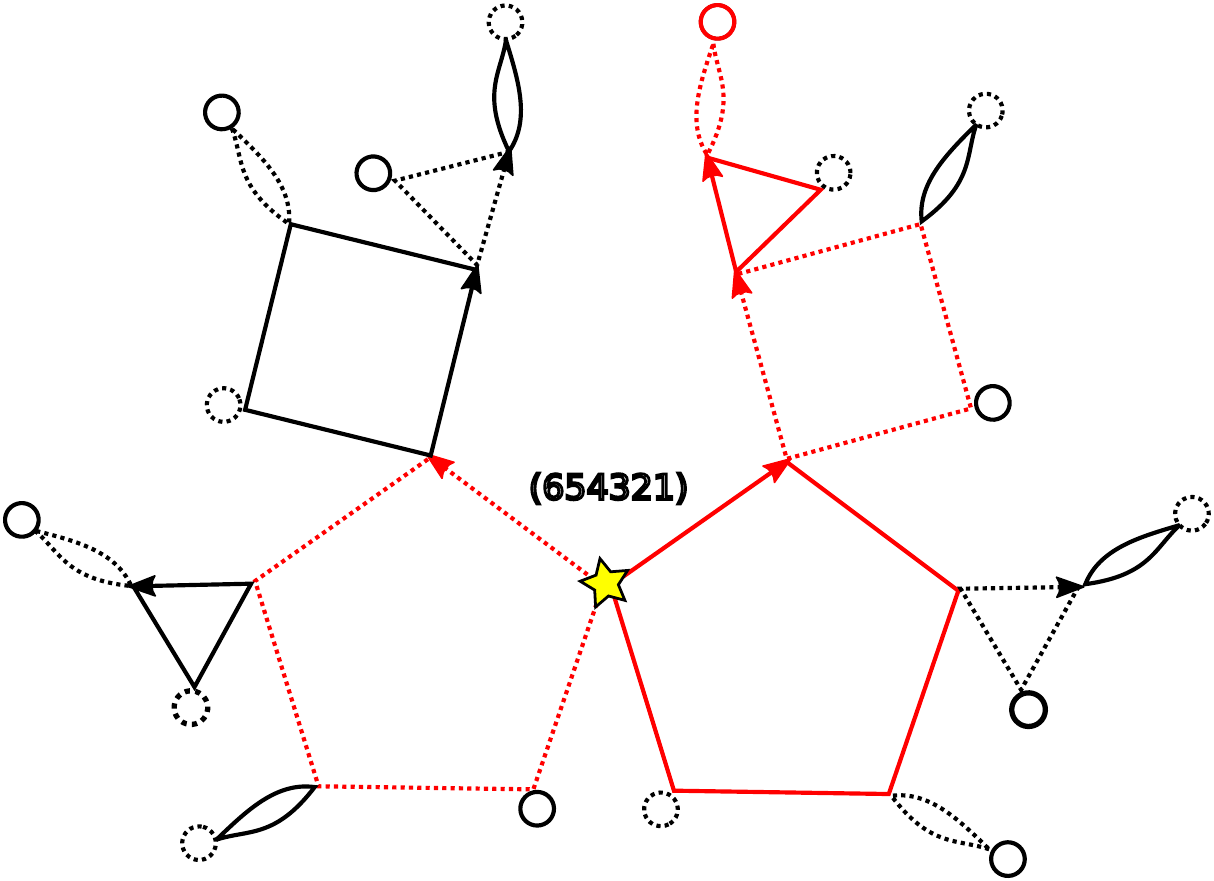}
}
\caption{Red loop representing $\psi$,  a pseudo-Anosov on Veech's double heptagon surface.}
\label{f:VeechHep}
\end{figure}
The primitive matrix associated with this loop has characteristic polynomial $(x^3-6x^2+5x-1)(x^3-5x^2+6x-1)$, verifying that the SAF invariant vanishes.

\bigskip 
Lanneau's example, given in \cite{McCasc},  has as its dilatation  the largest root of $x^3-8x^2+6x-1$.  We noticed that both $\psi$ and this example correspond to paths  passing through  the same 15 vertices of  the hyperelliptic Rauzy graph of 6-interval IETs.   These paths only differ in that  Lanneau's has added spins (indeed, the ``top right" 1-loop is repeated four times).

 \begin{figure}[h]
\scalebox{.4}{
\includegraphics{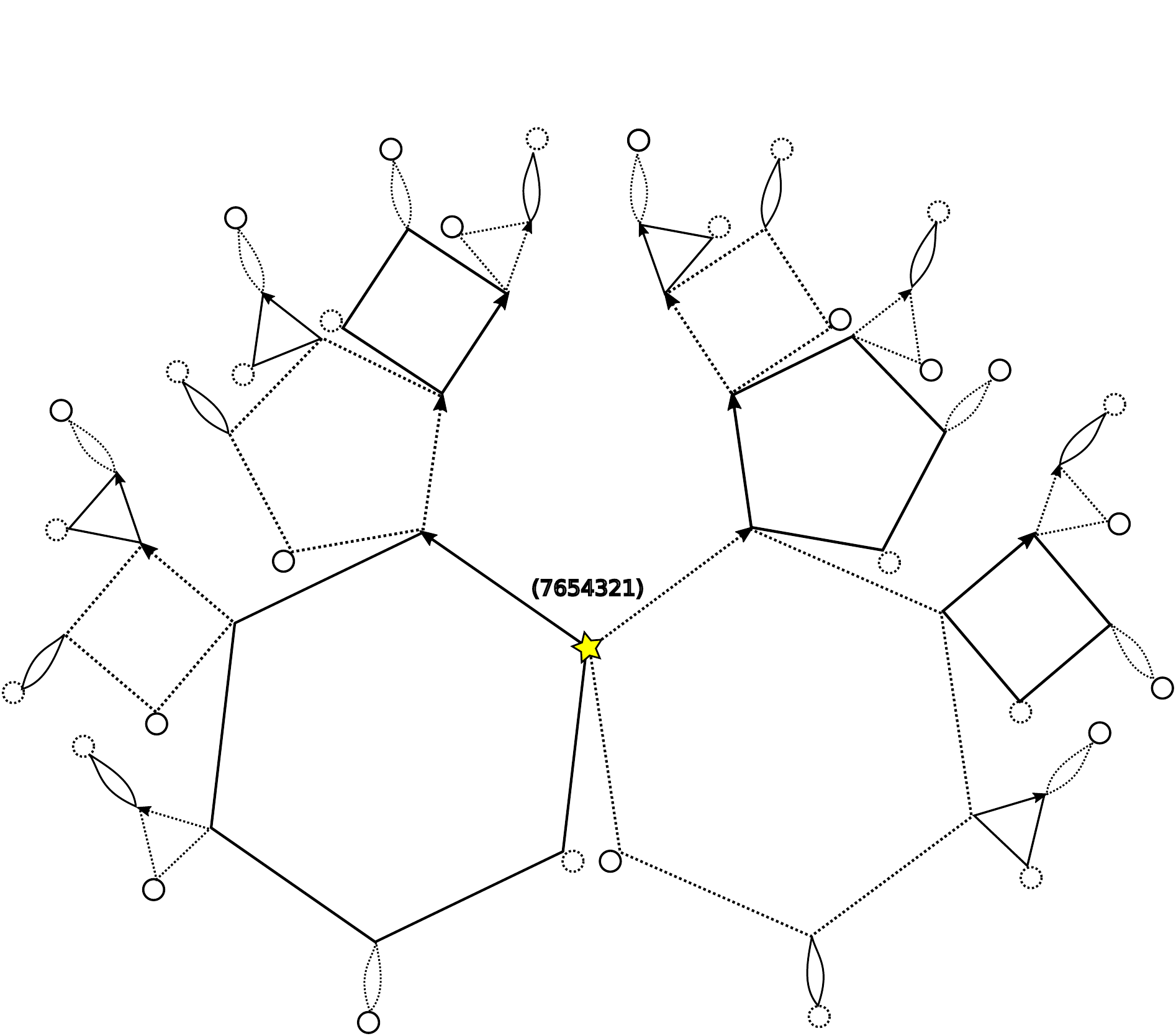}
}
\caption{Hyperelliptic Rauzy diagram with 7 sub-intervals.}
\label{f:hypSevenGraph}
\end{figure}
\subsection{New families of pseudo-Anosov maps in $\mathscr{H}^{\text{hyp}}(2,2)$} 
Motivated by the previous examples, we sought an infinite family of pseudo-Anosov with dilatation being the largest root of $x^3-(2k+4)x^2+(k+4)x-1$.  We found such a family, but rather by taking certain paths in the hyperelliptic Rauzy graph of 7-intervals IETs.   This graph is shown in Figure~\ref{f:hypSevenGraph}.   We find in fact four distinct families, and thus new examples of pseudo-Anosov maps with vanishing SAF-invariant.   Naturally enough, we describe the paths as starting at the vertex of  $\pi=(7654321)$.

\subsubsection{Closed loops $\alpha_k$}
For   $k\geq2$, let  $\alpha_k$ be given by $10101(0^{k-1})10011100001111100000(1^{k-1})0$, see Figure~\ref{f:fourLoops}.  We obtain the following transition matrix 

\[V_k=\begin{pmatrix}
 2 & 2 & 2 & 2 & 2 & k+1 & k\\
 0 & 2 & 2 & 2 & 2 & k & k-1\\
 0 & 0 & 2 & 2 & 1 & 0 & 0\\
 0 & 0 & k-1 & k & 0 & 0 & 0\\
 0 & 1 & 2 & 2 & 2 & 0 & 0\\
 1 & 2 & 2 & 2 & 2 & 2k & 2k-2\\
 1 & 1 & 1 & 1 & 1 & 1 & 1\\ 
 \end{pmatrix}\,.\]
This has  characteristic polynomial $(x^3-(2k+4)x^2+(k+4)x-1)(x^3-(k+4)x^2+(2k+4)x-1)(x-1)$. Hence, the corresponding pseudo-Anosov map $\phi_{\alpha_k}$ has vanishing SAF-invariant.

\subsubsection{Closed loops $\beta_k$}

Let $\beta_k:11010101(0^{k-1})1000111100000(1^{k-1})0$ for $k\geq2$, see Figure~\ref{f:fourLoops}. We obtain the  transition matrix
\[V_k=\begin{pmatrix}
 2 & 2 & 2 & 2 & 2 & k+1 & k\\
 0 & 2 & 2 & 2 & 1 & 0 & 0\\
 0 & 2k-2 & 2k & 1 & 0 & 0 & 0\\
 0 & k & k+1 & 2 & 0 & 0 & 0\\
 1 & 2 & 2 & 2 & 2 & 0 & 0\\
 0 & 0 & 0 & 0 & 0 & k & k-1\\
 1 & 1 & 1 & 1 & 1 & 1 & 1\\ 
 \end{pmatrix}\,\]
 whose characteristic polynomial is $(x^3-(2k+4)x^2+(k+4)x-1)(x^3-(k+4)x^2+(2k+4)x-1)(x-1)$. Hence, the corresponding pseudo-Anosov map $\phi_{\beta_k}$ has vanishing SAF-invariant.

\begin{figure}[h]
\noindent
\begin{tabular}{cc}

 \scalebox{.4}{
  \includegraphics{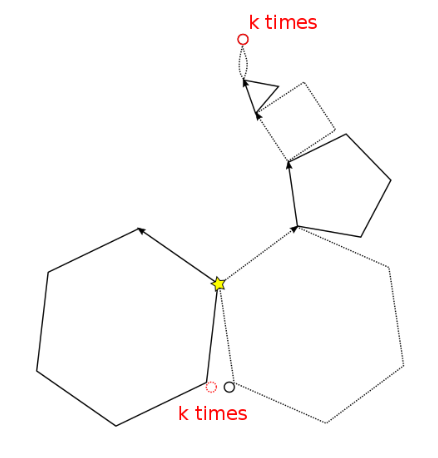}
  }
& 
\scalebox{.4}{
  \includegraphics{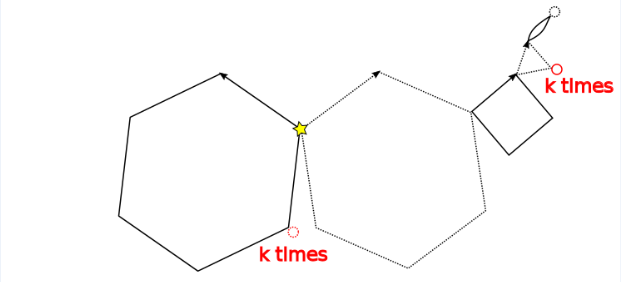}
  } 
 \\
  \scalebox{.4}{
  \includegraphics{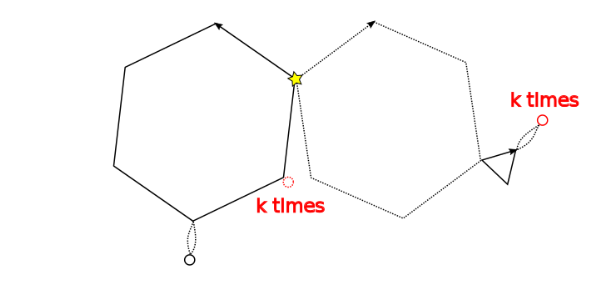}
  }
& 
\scalebox{.4}{
  \includegraphics{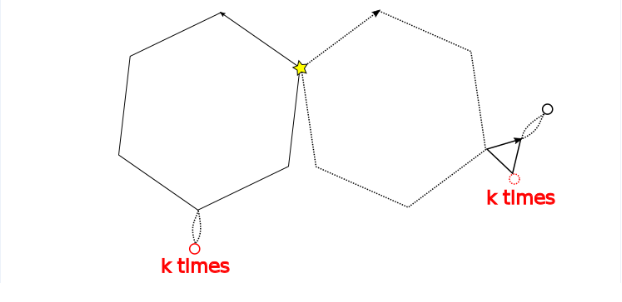}
  } 
  \end{tabular}
\caption{Clockwise from top left: The paths for $\alpha_{k+1}, \beta_{k+1}, \gamma_{k+1}, \delta_{k+1}\,$. }
\label{f:fourLoops}
\end{figure}

\subsubsection{Closed loops $\gamma_k$}
Let $\gamma_k:11101010(1)^{k-1}011100001(0)^{k-1}100$ for $k\geq2$, see Figure~\ref{f:fourLoops}. We obtain the transition matrix  
\[V_k=\begin{pmatrix}
 2 & 2 & 2 & 2 & 4 & 3k+2 & 3k-1\\
 0 & 2 & 2 & 1 & 0 & 0 & 0\\
 0 & k-1 & k & 0 & 0 & 0 & 0\\
 1 & 2 & 2 & 2 & 0 & 0 & 0\\
 0 & 0 & 0 & 0 & 2 & k+1 & k\\
 0 & 0 & 0 & 0 & 1 & 2k & 2k-2\\
 1 & 1 & 1 & 1 & 1 & 1 & 1\\ 
 \end{pmatrix}\,\]
whose characteristic polynomial is $(x^3-(2k+4)x^2+(k+4)x-1)(x^3-(k+4)x^2+(2k+4)x-1)(x-1)$. Hence, the corresponding pseudo-Anosov map $\phi_{\gamma_k}$ has vanishing SAF-invariant.

\subsubsection{Closed loops $\delta_k$}

Let $\delta_k:11101(0^{k-1})10011100001010(1^{k-1})0$ for $k\geq2$, see Figure~\ref{f:fourLoops}. We obtain the   transition matrix   
\[V_k=\begin{pmatrix}
 2 & 2 & 2 & 2 & k+2 & k+3 & 2\\
 0 & 2 & k+1 & k & 0 & 0 & 0\\
 0 & 1 & 2k & 2k-2 & 0 & 0 & 0\\
 1 & 2 & 2 & 2 & 0 & 0 & 0\\
 0 & 0 & 0 & 0 & 2 & 2 & 1\\
 0 & 0 & 0 & 0 & k-1 & k & 0\\
 1 & 1 & 1 & 1 & 1 & 1 & 1\\ 
 \end{pmatrix}\,\]
whose characteristic polynomial is $(x^3-(2k+4)x^2+(k+4)x-1)(x^3-(k+4)x^2+(2k+4)x-1)(x-1)$. Hence, the corresponding pseudo-Anosov map $\phi_{\delta_k}$ has vanishing SAF-invariant.

\subsection{Examples in other strata} The similarities in patterns of the paths for the Veech heptagon and Lanneau's pseudo-Anosov maps  in $\mathscr{H}^{\text{hyp}}(4)$ and those of our examples in $\mathscr{H}^{\text{hyp}}(2,2)$, motivated us to  investigate  similar patterns in the Rauzy hyperelliptic diagrams for 8 and 9 subintervals.   We discovered several isolated examples of SAF-zero maps here,  i.e. in the components  $\mathscr{H}^{\text{hyp}}(6)$ and $\mathscr{H}^{\text{hyp}}(3,3)$. However, an infinite family is yet to be found. Below, we also include other examples in genus 3.

\subsubsection{Five genus 3 examples in $\mathscr{H}^{\text{hyp}}(4)$}   We look further into the Rauzy hyperelliptic diagram with 6 subintervals. The following examples start at the hyperelliptic pair $\pi=(6,5,4,3,2,1)$. We will give the paths and the characteristic polynomial of the associated matrix. Hence, we can see the loops produce SAF-zero pseudo-Anosov from the cubic factors of the characteristic polynomials.

\begin{itemize}
  \item $11010100111000010;\\ 
(x^3-6x^2+5x-1)(x^3-5x^2+6x-1)$ 
  \item $1110101100010100;\\
(x^3-6x^2+5x-1)(x^3-5x^2+6x-1)$ 
  \item $11010000100111000010;\\
(x^3-8x^2+6x-1)(x^3-6x^2+8x-1)$ 
  \item $11010100111000011110;\\
(x^3-8x^2+6x-1)(x^3-6x^2+8x-1)$ 
  \item $1110111101100010100;\\
(x^3-8x^2+6x-1)(x^3-6x^2+8x-1)$.
\end{itemize}
\subsubsection{Seven genus 4 examples in $\mathscr{H}^{\text{hyp}}(6)$} We look into the Rauzy hyperelliptic diagram with 8 subintervals. The following examples start at the hyperelliptic pair $\pi=(8,7,6,5,4,3,2,1)$. We will give the paths and the characteristic polynomial of the associated matrix. Hence, we can see the loops produce SAF-zero pseudo-Anosov from the cubic factors of the characteristic polynomials.
\begin{itemize}
  \item $101010101100011110000101111110000000;\\ 
(x^3-9x^2+6x-1)(x^3-6x^2+9x-1)(x-1)^2$ 
  \item $101010101100011101000001111110000000;\\
(x^3-9x^2+6x-1)(x^3-6x^2+9x-1)(x-1)^2$ 
  \item $1101010100111000011110100000010;\\
(x^3-9x^2+6x-1)(x^3-6x^2+9x-1)(x-1)^2$ 
  \item $1101010100111000101111100000010;\\
(x^3-9x^2+6x-1)(x^3-6x^2+9x-1)(x-1)^2$ 
  \item $11010100100111000011111000000110;\\
(x^2-6x+1)(x^3-6x^2+5x-1)(x^3-5x^2+6x-1)$
  \item $11101011011000111100000100100;\\
(x^2-6x+1)(x^3-6x^2+5x-1)(x^3-5x^2+6x-1)$
  \item $1111010010011100001011011000;\\
(x^2-6x+1)(x^3-6x^2+5x-1)(x^3-5x^2+6x-1)$.
\end{itemize}

\subsubsection{Three genus 4 examples in  $\mathscr{H}^{\text{hyp}}(3,3)$}   We look into the Rauzy hyperelliptic diagram with 9 subintervals. The following examples start at the hyperelliptic pair $\pi=(9,8,7,6,5,4,3,2,1)$. We will give the paths and the characteristic polynomial of the associated matrix. Hence, we can see the loops produce SAF-zero pseudo-Anosov from the non-reciprocal quartic factors of the characteristic polynomials.
\begin{itemize}
  \item $11010101101100011110000011111010000000110;\\ 
  (x-1)(x^4-9x^3+22x^2-11x+1)(x^4-11x^3+22x^2-9x+1)$ 
  \item $11010101101100011101000001111110000000110;\\
  (x-1)(x^4-9x^3+22x^2-11x+1)(x^4-11x^3+22x^2-9x+1)$ 
  \item $11110101101100011101000001011011000;\\
  (x-1)(x^4-9x^3+22x^2-11x+1)(x^4-11x^3+22x^2-9x+1)$. 
\end{itemize}

\end{document}